\documentclass[english,reqno,10pt]{amsart}
\usepackage[T1]{fontenc}
\usepackage[latin9]{inputenc}
\usepackage[letterpaper, margin=4.1cm]{geometry}
\usepackage{amsthm}
\usepackage{amssymb}
\usepackage{setspace}
\usepackage{enumerate}
\usepackage{fancyhdr}
\usepackage{xcolor}
\usepackage{upgreek} 
\usepackage{mathabx}
\usepackage[colorlinks=true]{hyperref}
\hypersetup{linkcolor=red,citecolor=blue,filecolor=dullmagenta,urlcolor=blue}
\usepackage[shortlabels]{enumitem}
\usepackage{cancel}
\usepackage{graphicx}

\numberwithin{equation}{section}
\theoremstyle{plain}
\newtheorem{thm}{Theorem}[section]
\newtheorem{lem}[thm]{Lemma}
\newtheorem{prop}[thm]{Proposition}
\newtheorem{cor}[thm]{Corollary}
\newtheorem{claim}[thm]{Claim}
\newtheorem{rem}{Remark}[section]

\newcommand{\bb}[1]{{\mathbb{#1}}}
\newcommand{\ca}[1]{{\mathcal{#1}}}
\newcommand{\R}{\mathbb{R}}
\newcommand{\C}{\mathbb{C}}
\newcommand{\bal}{\boldsymbol{\alpha}}
\newcommand{\dt}{\frac{d}{dt}}
\newcommand{\vA}{\varphi}
\newcommand{\de}{\delta}
\newcommand{\K}{\mathcal{J}}
\newcommand{\wK}{\widetilde{\K}}
\newcommand{\subscript}[2]{$#1 _ #2$}

\newcommand{\be}{\begin{equation}}
\newcommand{\ee}{\end{equation}}
\newcommand{\ba}{\begin{aligned}}
\newcommand{\ea}{\end{aligned}}
\newcommand{\bpm}{\begin{pmatrix}}
\newcommand{\epm}{\end{pmatrix}}

\newcommand{\new}[1]{\textcolor{black}{#1}}

\begin{document}

	\title[Decay of solutions of nonlinear Dirac equations: the 2D case]{Decay of solutions of nonlinear Dirac equations: the 2D case}
	\author[S. Herr]{Sebastian Herr} 
	\address{Fakultat f\"ur Mathematik, Universit\"at Bielefeld,  Postfach 10 01 31, 33501 Bielefeld, Germany.}
	\email{herr@math.uni-bielefeld.de}
	\thanks{S.H.: Funded by the Deutsche Forschungsgemeinschaft (DFG, German Research Foundation) -- Project-ID 317210226 -- SFB 1283}
	\author[C. Maul\'en]{Christopher Maul\'en} 
	\address{Fakultat f\"ur Mathematik, Universit\"at Bielefeld,  Postfach 10 01 31, 33501 Bielefeld, Germany. Current address: Departamento de Ingenier\'ia Matem\'atica, Facultad de Ciencias F\'isicas y Matem\'aticas, Universidad de Concepci\'on, Avda. Esteban Iturra sn., Barrio Universitario interior, 4030000 Concepci\'on, Chile.}
	\email{
    chrismaulen@udec.cl}
	\thanks{Ch.Ma.: Funded by the Deutsche Forschungsgemeinschaft (DFG, German Research Foundation) -- Project-ID 317210226 -- SFB 1283, and ANID Fondecyt 1231250}
	\author[C. Mu\~noz]{Claudio Mu\~noz}  
	\address{Departamento de Ingenier\'{\i}a Matem\'atica and Centro
de Modelamiento Matem\'atico (UMI 2807 CNRS), Universidad de Chile, Casilla
170 Correo 3, Santiago, Chile.}
	\email{cmunoz@dim.uchile.cl}
	\thanks{Cl.Mu.: Partially funded by Chilean research grants FONDECYT 1231250 and Basal CMM FB210005.}

\keywords{virial estimates, Dirac equation, decay}

	\begin{abstract}
	We study the long-time behavior of small solutions for a broad class of 2D Dirac-type equations with suitable nonlinearities. 
    First, we prove that for nonlinearities with power $p\geq 5$ (massless case) and $p\geq7$ (massive case), any small globally bounded radial solution with vorticity $S\ne -1,0$ decays to zero locally in $L^2_{loc}$, as time tends to infinity. For solutions uniformly bounded in time in a weighted $H^1$ space, this decay result extends to lower powers $p\geq 3$ (massless) and $p\geq5$ (massive).
    Our main results apply to several physical models of current interest, such as the 2D Dirac equation with a honeycomb potential described by Fefferman and Weinstein. Finally, we rule out the existence of small, localized structures such as standing breathers or solitary waves in the 2D regimes considered. 	To prove these results, we introduce new virial identities with a particular algebra that are applied directly to the Dirac model, and without resorting to the nonlinear Klein-Gordon equation.
	\end{abstract}
	\maketitle

	\section{Introduction}
The (linear) Dirac equation was originally formulated as a relativistic counterpart of the Schr\"odinger equation \cite{Dirac}, and it plays a fundamental model within relativistic quantum mechanics. The free 
Dirac equation can be written as 
\begin{equation}\label{linear}
\begin{aligned}
    -i\gamma^{\mu}\partial_\mu \psi + m\psi = 0. 
\end{aligned}
\end{equation}
Here, $\psi:\R\times\R^{n}\to \C^{N}$ denotes a spinor-valued wave function, $m\in \R$ is the mass parameter, and the spinor dimension $N=2^{\lfloor(n+1)/2\rfloor}$. The summation convention 
$\gamma^{\mu}\partial_{\mu}=\sum_{\mu=0}^{n} \gamma^\mu \partial_{\mu}$ is used, with $\partial_{0}\equiv \partial_t$. The Dirac matrices $\gamma^{\mu}$ satisfy the anti-commutation relation
\[
 \gamma^\mu \gamma^\nu + \gamma^\nu \gamma^\mu = 2\,\eta^{\mu \nu},
\]
with the Minkowski metric $\eta=\mbox{diag}(1,-1,\dots,-1)$. 
To capture the dynamics of Dirac fermions, including self-interaction effects \cite{Dirac,Thaller}, various \new{nonlinear versions of this equation} have been proposed and rigorously studied, such as the Soler and Thirring models \cite{soler, T58}, which enjoy invariance under Lorentz boosts.

Defining $\ca{D}_m = -i\gamma^{\mu}\partial_\mu + m$ as the Dirac operator, it is referred to as \emph{massive} when $m\neq 0$ and \emph{massless} when $m=0$. Setting $\beta := \gamma^0$, one observes that $\gamma^0 \ca{D}_m = -i\partial_t + \ca{H}_m$. The associated Hamiltonian is 
$\ca{H}_m = -i\beta\gamma^{j}\partial_j + m\beta$ and we set $\mathcal{H} = \mathcal{H}_0$. With $\bal = (\alpha^1,\dots,\alpha^n)$ and $\alpha^j = \beta\,\gamma^j$ it takes the form
\begin{equation*}
\mathcal{H} = - i\,\bal \cdot \nabla = -i\,\alpha^j \partial_j,
\end{equation*}
further details can be found in \cite{Thaller,OY04}. \new{In dimension $n=2$ we fix the explicit representation, in terms of the Pauli matrices,
\be\label{paulis}
\alpha^1=\sigma_1=\begin{pmatrix}0&1\\ 1&0\end{pmatrix},\qquad
\alpha^2=\sigma_2=\begin{pmatrix}0&-i\\ i&0\end{pmatrix},\qquad
\beta=\begin{pmatrix}-1&0\\ 0&1\end{pmatrix},
\ee
which reproduces system \eqref{eq:2D}.} 

A question of particular interest, closely connected to the long time behavior of solitary waves, is the local energy decay of small solutions in nonlinear Dirac models. This question has been extensively studied in previous works \cite{CF_,Cacciafesta,CS_2016}, specially in the linear case. In \cite{HMM}, the authors studied the long-time behavior of solutions to nonlinear Dirac equations in various regimes. In one dimension massless case, all global solutions decay to zero within an expanding spatial region, thus excluding breathers or soliton-type structures. In the massive one-dimensional case, small symmetric solutions also decay, despite the possible existence of solitary waves. In higher dimensions ($d\geq 3$), decay is obtained outside the light cone under mild assumptions, and local $L^2$ decay is obtained on compact sets for certain nonlinearities including Soler type ones. We recall that the Soler type nonlinearity, appearing in the right-hand side of \eqref{linear},  has the form 
\begin{equation}\label{after}
 f(\psi^{\dagger}\beta \psi) \psi,
\end{equation}
where $f$ is a polynomial real value function such that $f(0)=0$. \new{Equivalently, written as a Hamiltonian evolution equation, the Soler model reads $i\partial_t\Psi=\mathcal{H}_m\Psi+f(\Psi^{\dagger}\beta\Psi)\,\beta\Psi$, which is the usual form in which this nonlinearity appears.} The approach relies on weighted virial identities adapted to the Dirac operator, providing robust dispersive information across different nonlinear settings. Compared with the case where linear operators are treated, nonlinear ones require modifications and careful treatment of nonlinearities since counterexamples to local decay exist in several situations. See Subsection \ref{comparison} for more details.

However, the physically important 2D case was left open. Several new ingredients appear in this case: first of all, the low dimension and the slow decay join the lack of $L^\infty$ control on the solution, making the situation as hard as in 1D, but with additional difficulties because of the lack of good enough embeddings. An example of this fact is that $L^\infty$ control on the solution requires more regularity that just $H^1$, even in the radial case. This is not the situation in 3D, where better estimates are present in the radial case. Additionally, the linear decay $O(1/t)$ might lead to long-range effects of cubic nonlinearities without null-structure.

In 2D, for the spinor $\psi=(\psi_1,\psi_2)$ we consider standard nonlinear Dirac models of the form
\begin{equation}\label{eq:2D}
\begin{aligned}
i(\partial_t\psi_1+\partial_x\psi_2)+\partial_y \psi_2+m\psi_1=&~{} W_1\\
i(\partial_t\psi_2 + \partial_x\psi_1)-\partial_y \psi_1-m\psi_2=&~{} W_2,
\end{aligned}
\qquad \psi_1,\psi_2 \in \C,
\end{equation}
where 
\be\label{non_form1}
(W_1,W_2)=(W_1,W_2)(\psi_1,\overline{\psi_1},\psi_2,\overline{\psi}_2)
\ee
is a polynomial-type nonlinearity depending, in principle, on the four independent variables of the spinor field. We assume that $W$ has the structural gauge property\be\label{gauge}
\ba
& W(e^{iS\theta}\psi_1,e^{-iS\theta}\overline{\psi_1}, i e^{i(S+1)\theta}\psi_2, (-i)e^{-i(S+1)\theta}\overline{\psi_2})\\
&\quad =(e^{iS\theta}W_1, i e^{i(S+1)\theta}W_2) (\psi_1,\overline{\psi_1},\psi_2,\overline{\psi_2}).
\ea
\ee
From the very beginning we see that the form in \eqref{eq:2D} differs from 1D and 3D cases in the sense that in the 1D case the spatial variable, or in the 3D case the radial variable had no spatial counterpart or competitor. In 2D, this is not the case: we have a competition between $x$ and $y$ derivatives of the spinor field $\psi =(\psi_1,\psi_2)$. In what follows, we shall see the physical motivation leading to this model, the recent literature on the subject, 
\new{and, from a physical point of view, its connection with graphene.} One of the main motivations to the present work is to study the nonlinear Dirac equation with a honeycomb \new{nonlinearity}, introduced by Fefferman and Weinstein (see Subsection \ref{sec:2D} for more details and references), given by 
\begin{equation}\label{eq:FW_Dirac}
\begin{aligned}
\partial_t \psi_1 =& -\overline{\lambda}(\partial_x+i\partial_y)\psi_2-ig (\beta_1|\psi_1|^2+2\beta_2 |\psi_2|^2)\psi_1
\\
\partial_t \psi_2 =&-\lambda (\partial_x-i\partial_y)\psi_1 -i g(2\beta_2|\psi_1|^2+\beta_1 |\psi_2|^2)\psi_2,
\end{aligned}
\end{equation}
where $\lambda\in \C\setminus\{0\}$ and $g$ is a real number. 
The above system has the following conserved quantities
\[
\begin{aligned}
M[\psi_1,\psi_2]=&~{}\int_{\R^2} (|\psi_1|^2+|\psi_2|^2),\\
E[\psi_1,\psi_2] =&~{} \Im \int_{\R^2} \psi_2 (\partial_x+i\partial_y)\overline{\psi_1}  
\\&-\frac{g}{4}\int_{\R^2}(\beta_1|\psi_1|^4+4\beta_2|\psi_1|^2|\psi_2|^2+\beta_1|\psi_2|^4),
\end{aligned}
\]
which are the mass and the energy, respectively. It is interesting to notice that the energy is not coercive, a characteristic of Dirac models.

\subsection{2D Dirac models}\label{sec:2D}

In this paper, we shall study \eqref{eq:2D} under a general partial wave decomposition characterized by a parameter $S$, usually denoted as ``vorticity''. In particular, we shall focus our work on a region of parameters where the vorticity is not $0$ or $-1$. In this case, one component of the spinor is purely radially symmetric, making the analysis different from the case $S\neq 0,-1$.

Let us consider polar coordinates $(r,\theta)\in (0,\infty)\times (0,2\pi)$; and a solution of the Dirac equation of the form (see \cite{BF_decay,CKSCL,CBCKS_2018,CS_2016})
\begin{equation}\label{eq:psi1}
\psi(t,x)=\begin{pmatrix} \psi_1 \\ \psi_2 \end{pmatrix}(t,r,\theta)
=\begin{pmatrix}
\phi_1(t,r) e^{iS\theta} \\
i \phi_2(t,r) e^{i(S+1)\theta}
\end{pmatrix}, \quad \mbox{ with } \phi_1,\phi_2 \in \C.
\end{equation}
(For more details see \cite[Proposition 2.1]{CS_2016}.) Notice that the previous decomposition allows us to work within a subspace of the partial wave space, similar to the one presented by Wakano \cite{wakano} in 3D Dirac. Subsequently, after some standard computations, one observes that $\partial_x-i\partial_y=e^{-i\theta}(\partial_r -\frac{i}{r}\partial_{\theta})$ and   $\partial_x+i\partial_y=e^{i\theta}(\partial_r +\frac{i}{r}\partial_{\theta})$. 
Therefore, the 2D nonlinear Dirac equation \eqref{eq:2D}, when restricted to solutions of the form \eqref{eq:psi1}, becomes equivalent to the model
\begin{equation}\label{eq:2Dr}
\begin{aligned}
i\partial_t \phi_1=&~{} \left(  \partial_r   +\frac{(S+1)}{r} \right)\phi_2-m\phi_1+W_1,
\\
i\partial_t \phi_2=&~{} -\left(\partial_r -\frac{S}{r}\right)\phi_1+m\phi_2 +W_2,
\end{aligned}
\end{equation}
which is a reduced $2\times 2$ radial complex system, where, from \eqref{non_form1},
\be\label{deco_complex}
\ba
(\phi_1,\phi_2)= &~{} (\phi_{11}+i\phi_{12},\phi_{21}+i\phi_{22}) \in \C^2, \\
W=(W_1,W_2)=&~{} (W_{11}+iW_{12},W_{21}+iW_{22}) \in \C^2.
\ea
\ee
In order to get \eqref{eq:2Dr}, we have assumed that the nonlinearity has the structural gauge property \eqref{gauge}.
Notice that the structural gauge property introduced  is present, for instance, in the case of odd power nonlinearities. The property \eqref{gauge} makes \eqref{eq:2Dr} independent of the angular variable, and it is indeed present in practice, see e.g. the Soler's case\footnote{In this case, the nonlinearity is given by $W_1 = -f(\psi^{\dagger}\beta \psi) \psi_1\quad W_2=f(\psi^{\dagger}\beta \psi) \psi_2$} \cite{soler} or \eqref{HC} below in the 2D setting. Now, to simplify some of the forthcoming computations, let us define
\be\label{mod1}
|\phi|^2:= \phi_{11}^2 +\phi_{12}^2 +\phi_{21}^2 +\phi_{22}^2,
\ee
and
\be\label{mod2}
 |\nabla \phi|^2:= (\partial_r \phi_{11})^2 +(\partial_r \phi_{12})^2 +(\partial_r \phi_{21})^2 +(\partial_r \phi_{22})^2.
\ee
Assume additionally that the non-linearity is of polynomial type at small scales: there exists $C>0$ and $p_{*}>1$ such that
\begin{equation}\label{eq:W_2D}
|W_1|+|W_2|\leq C|\phi|^{p}, \mbox{ for } p\geq p_{*}, \quad |\phi|<1.
\end{equation}
\new{We emphasize that, beyond \eqref{eq:W_2D}, the proofs only use the companion bound on the derivative of the nonlinearity,
\begin{equation}\label{eq:dW_2D}
|\partial_r W_1|+|\partial_r W_2|\leq C|\phi|^{p-1}|\nabla\phi|, \qquad |\phi|<1,
\end{equation}
which holds for instance whenever $W$ is a polynomial in $(\phi_1,\overline{\phi_1},\phi_2,\overline{\phi_2})$ vanishing to order $p$ at the origin.} 

We remark that condition \eqref{eq:W_2D} covers classical models known to possess solitary waves, such as the Dirac equation with a honeycomb \new{nonlinearity} \cite{Honey}, namely the nonlinear model introduced by Fefferman and Weinstein, see \eqref{eq:FW_Dirac}, when dealing with waves in 2D honeycomb structures (the linear case was considered in \cite{FW1,FW2}). \new{Here we use the term \emph{honeycomb nonlinearity} rather than \emph{honeycomb potential}: Fefferman and Weinstein obtain \eqref{eq:FW_Dirac} as an effective model from the cubic nonlinear Schr\"odinger equation with a potential having the symmetries of the honeycomb lattice, for excitations near a Dirac point.} 

Indeed, for this case, the nonlinearity $W=(W_1 ,W_2)$ is given by:
\begin{equation}\label{HC}
W_1 = (\beta_1|\phi_1|^2+\new{2}\beta_2|\phi_2|^2)\phi_1, \quad W_2 =(\new{2}\beta_2|\phi_1|^2+\beta_1|\phi_2|^2)\phi_2,
\end{equation}
where $(\beta_1,\beta_2)$ are positive real-valued parameters associated to the Floquet-Bloch model. The motivation behind this model is as follows. In \cite{Honey}, the authors aimed to study the propagation of waves on honeycomb structures, such as those present in 
\new{graphene}, a 2D structure where the carbon atoms are arranged in a
honeycomb lattice. With this in mind, the authors started with the non-relativistic Schr\"odinger equation with a honeycomb lattice potential and formally derived a massless, nonlinear 2D Dirac system that governs the effective dynamics (see (4.4)-(4.5) in \cite{Honey}). This model has been widely studied in a series of works \cite{Honey,Honey2,FW1,FW2,Transverse_Pelinovsky} and extended to deformed honeycomb lattice structures (see (5.2a)-(5.2b) in \cite{AY}).

The second motivation in our setting is given by recent works dealing with critical Dirac elliptic models. In a series of papers, W. Borrelli and R. Frank have studied the existence of ground state bubbles and killing spinors for the critical Dirac equation on dimensions greater than or equal to two with several type potentials (see \cite{bubbles,B_loc} and the references therein). In particular, in \cite{B_loc}  Borrelli showed the existence of infinitely many non square-integrable stationary solutions for a family of massless Dirac equations in 2D, using a particular radial ansatz of the form \eqref{eq:psi1} with $S=0$. Later, Borrelli and Frank in \cite{BF_decay} proved sharp pointwise decay in $\R^n$ with $n\geq 2$ of the solutions to the critical Dirac equation on compact spin manifolds. In the 2D case, for a cubic nonlinearity of the type $|\psi|^2\psi$, the authors found that the obtained decay estimates are sharp for "ground state solutions". In contrast, the "excited state solutions" in general exhibit a faster decay. In particular, Theorem 1.4 in \cite{BF_decay} describes the asymptotic behavior of static solutions, both as $r\to \infty$ and near $r=0$, to \eqref{eq:2Dr} with a Honeycomb potential. The authors noted that the behavior changes depending on the sign of the term $S+1/2$, where $S$ is the vorticity parameter. Moreover, the authors constructed an explicit solution in the form of a rational polynomial when the parameters are $\beta_1=1$ and $\beta_2=1/2$ (see \cite[Theorem 7.1]{BF_decay}). \new{With this choice, and recalling the factor $2$ in \eqref{eq:FW_Dirac}, the honeycomb nonlinearity reduces to the Kerr nonlinearity $|\psi|^2\psi$.}

The global well-posedness problem of the nonlinear Dirac equation is challenging due to the lack of a definite sign in the Hamiltonian. Most works address the massive and the massless cases separately. The literature is extensive across both 1D and higher dimensions; here, we highlight the most relevant results for the 2D case. Firstly, we shall recall that the cubic massless Dirac equation is invariant under the scaling $\psi(t,x)\to \lambda^{1/2}\psi(\lambda t, \lambda x)$, thus the scale invariant regularity is $s_c=(n-1)/2$, where $n$ is the dimension of the problem.  Pecher \cite{pecher1,pecher2} proved that the cubic 2D Dirac equation is locally well-posed in $H^s$ for the almost critical exponent $s > 1/2$.
Later,  Bournaveas and Candy \cite{BC_2d},  using the Fierz identity and the null structure obtained,  proved that massless cubic Dirac equation is globally well-posed  for small data in the \emph{scale invariant space} $\dot{H}^{1/2}(\R^2)$.  Bejenaru and the first author \cite{BH}, established the global well-posedness and scattering for the cubic Dirac equation with small initial data in the critical space $\dot{H}^{1/2}$.
Later, in \cite{GS_hartree}  local and global well-posedness for the Dirac equation with Hartree type-nonlinearity was proved in $H^s$ for $s\geq0$. Recently, Candy and the first author \cite{CH23}, for the cubic Dirac equation in 2D and 3D, offer a unified treatment of the massive and massless cases, showing their intrinsic connections and obtaining convergence in the massless and the non-relativistic limit. They employed a bilinear Fourier restriction method and atomic function spaces to prove the global well-posedness of the Cauchy problem for small initial data. 

The nonlinear Dirac equation is well-known to admit solitary wave solutions for a wide range of nonlinearities,  see \cite{CBCKS_2018,CKSCL,bubbles} for existence and properties in the 2D case. \new{For the existence of solitary waves in the massive case, see also the variational constructions of Cazenave--V\'azquez \cite{CazenaveVazquez} and Esteban--S\'er\'e \cite{EstebanSere} carried out for $d=3$, and it is possible that some of their arguments carry over to $d=2$.}
 
In \cite{CBCKS_2018,CKSCL}, the authors studied the 2D massive nonlinear Dirac-type equation and \new{numerically} analyzed the spectral stability of its solitary waves and vortex solutions.
 For the cubic nonlinearity, the states of higher vorticity are generically unstable and split into lower charge vortices in a way that preserves the total vorticity. 
Conversely, for the quintic nonlinear 2D Dirac equation, solutions with vorticity $S=0$ are potentially stable. Moreover, they observed that with a quintic nonlinearity, the instabilities caused by radially symmetric perturbations cause the density width (and amplitude) to oscillate, leading to a "breathing" structure, but there is no collapse. In the case of the Kerr type nonlinearity, also known as mean field interaction  (i.e. $|\psi|^2\psi$), which usually appears in the description of Bose-Einstein condensates, Borrelli in \cite{B_SS_Kerr} proved the existence of smooth, exponentially decaying static solutions. Moreover, the same author in \cite{B_Sym_2d} obtained the existence of infinitely many symmetric smooth, exponentially decaying solutions for the 2D Dirac equation with Honeycomb potential, for $S\neq 0$ and $\psi(0)=0$.

\subsection{Main results}\label{sec:main}

Now we shall assume $S\neq 0,-1.$ Recall that in the case of zero vorticity and non zero vorticity ($S=0$ and $S\neq 0$, respectively) the existence of static solutions has been proved in \cite{B_SS_Kerr,B_Sym_2d}, and their classification and description were given  in \cite{BF_decay}. We will return to this point for a detailed discussion after establishing the main results of the present work.

Let $\delta>0$ be a fixed parameter. Recall the notation for $|\phi|^2$ and $|\nabla \phi|^2$ introduced in  \eqref{mod1}-\eqref{mod2}. Define the following weighted space of radial functions
\begin{equation}\label{eq:weight}
\begin{aligned}
E(\de) =\left\{ \Psi\in H^1_{\mathrm{rad}} ~{} \big| ~{}  \|\Psi \|_{E(\de)} := \| \langle r\rangle ^{\delta/2}\nabla \Psi\|_{L^2}+\| \langle r\rangle^{\delta/2}\Psi \|_{L^2}<\infty \right\}.
\end{aligned}
\end{equation}
\new{where, and throughout this work, for radial functions the $L^2$-norm is defined by
\be\label{defn_L2}
\|f\|_{L^2}^2=\int_{0}^{\infty} f^2(r)\,r\,dr.
\ee}
Notice that \eqref{defn_L2} takes into account the fact that we are working in two dimensions. Having this fact in mind, we can now establish our first result for the 2D Nonlinear Dirac equation.
\begin{thm}\label{thm:2D_m-}
Assume that $W=(W_1,W_2)$ obeys \eqref{eq:W_2D} with thresholds
\begin{enumerate}[label=\emph{(\subscript{L}{\arabic*})}]
\item\label{thm:-_m-_w}   $p_{*}=3$ in the massless case;
\item\label{thm:-_m+_w}  $p_{*}= 5$ in the massive case.
\end{enumerate}
Let $\delta>0$ be fixed. There exists $\varepsilon>0$ such that the following is satisfied: Let
\[
{\bf \phi}=(\phi_1,\phi_2)\in C^1_{loc} \left( \R: L^2(\R^2;\C)^2 \right)\cap C_{loc} \left( \R: E(\delta)\right),
\] 
be any radial global solution to the  2D Dirac equation \eqref{eq:2D}-\eqref{eq:psi1}, with vorticity $S\in \bb{Z}\setminus\{-1,0\}$, and small, in the sense that $\sup_{t\geq 0} \|{\bf \phi}(t)\|_{E(\delta) \cap L^\infty} <\varepsilon$. 
Then, for  any $R>0$, it holds
\begin{equation}\label{limit2D_m-}
\lim_{t\to \infty} \| {\bf \phi} (t)\|_{L^2(B(0,R))}=0.
\end{equation}
Therefore, \new{there exist} neither small standing solitons nor small breather solutions for the Dirac equation \eqref{eq:2D} within any compact set of $\mathbb R^2$. \end{thm}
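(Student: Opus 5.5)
We prove Theorem \ref{thm:2D_m-} by a virial argument carried out directly on the reduced radial system \eqref{eq:2Dr}, with no passage to a Klein--Gordon equation. The plan is to build a bounded virial functional whose time derivative is bounded below by a coercive local quantity, modulo small errors. From this we get space-time integrability of a localized $L^2$ density, and then convert integrability into the pointwise-in-time limit \eqref{limit2D_m-}.

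\emph{Virial functional.} For a weight $\vA=\vA(r)$ we set
\[
\mathcal I(t)=\Im\int_0^\infty \vA(r)\big(\phi_1\overline{\phi_2}+\overline{\phi_1}\phi_2\big)\,r\,dr ,
\]
or, if needed, the companion real combination $\Re\int\vA\,\phi_1\overline{\phi_2}\,r\,dr$. We want the combination whose derivative produces $\partial_r$ of $|\phi|^2$. We choose $\vA(r)=\tanh(r/\lambda)$, or a variant with growth $r^{\delta}$-type tails. Since $\sup_t\|\phi\|_{E(\delta)}<\varepsilon$, and we use the weight only up to the order controlled by $\langle r\rangle^{\delta/2}$, the functional satisfies $\sup_t|\mathcal I(t)|\lesssim \varepsilon^2$.

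Differentiating with \eqref{eq:2Dr} and integrating by parts in $r\,dr$ gives
\[
\frac{d}{dt}\mathcal I=\int \Big(\vA'+\frac{\vA}{r}\Big)\big(a|\phi_1|^2+b|\phi_2|^2\big)r\,dr+\int \frac{\vA}{r}\Big(S|\phi_1|^2+(S+1)|\phi_2|^2\Big)\frac{c}{r}\,r\,dr
\]
\[
\quad+\ m\text{-terms}\ +\ \text{nonlinear terms}.
\]
The algebraic heart of the argument is that the angular terms $\frac{S}{r}$ and $\frac{S+1}{r}$ combine into $\frac{S(S+1)}{r^2}$-type contributions. These have a definite sign precisely when $S\notin\{-1,0\}$, and together with $\vA'\ge0$ they yield coercivity:
\[
\frac{d}{dt}\mathcal I\ \gtrsim\ \int \Big(\vA'+\frac{\vA}{r}\Big)\frac{1}{\langle r\rangle}|\phi|^2\, r\,dr-\text{errors}.
\]
We have to choose the combination (for instance $\Im$ versus $\Re$ of $\phi_1\overline{\phi_2}$, or adding a second functional weighted by $\vA/r$) so that all cross terms $\Re(\phi_1\overline{\phi_2})$ either cancel or are absorbed by Cauchy--Schwarz using $|S(S+1)|\ge 2$.

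\emph{Error terms.} In the massive case the mass produces terms like $m\int\vA(\ldots)$ that do not vanish. We handle them by adding a second functional, e.g. $\int\psi(r)\,|\phi_1|^2-|\phi_2|^2$ or a modified $\Re\int\vA\phi_1\overline{\phi_2}$, whose derivative cancels them up to terms containing $\partial_r\phi$. Controlling those terms costs derivatives and one extra power, which is why the threshold rises from $p_*=3$ to $p_*=5$.

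The nonlinear terms are bounded via \eqref{eq:W_2D}--\eqref{eq:dW_2D} by $\int|\vA||\phi|^{p+1}$. For $p\ge3$ we estimate this by
\[
\|\phi\|_{L^\infty}^{p-1}\int |\vA|\,|\phi|^2\,r\,dr .
\]
The remaining nonlocal tail, where $\vA\sim1$ but the coercive weight only decays like $1/r$, is controlled by the weighted smallness $\langle r\rangle^{\delta}$ from $E(\delta)$. This is where uniform boundedness in $E(\delta)$ is used in place of the $p\ge5$ or $7$ hypothesis of the other theorem.

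The main obstacle, I expect, is exactly this tail region. In 2D the weight $\vA/r$ gives only $r^{-1}$ decay, while the integrand is not integrable against $r\,dr$ without extra decay. I would first try the scale $\lambda\to\infty$ together with $\vA=\tanh$, and bound $\int_{r>\lambda}|\phi|^{p+1}r\,dr\le \varepsilon^{p-1}\lambda^{-\delta}\|\langle r\rangle^{\delta/2}\phi\|^2$.

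\emph{Conclusion.} Integrating in time gives $\int_0^\infty\int_{r<R}|\phi|^2\,r\,dr\,dt\lesssim\varepsilon^2$, so $\|\phi(t_n)\|_{L^2(B_R)}\to0$ along a sequence $t_n\to\infty$. To upgrade to the full limit, we use a localized mass functional $\int\chi(r/R)|\phi|^2$. By \eqref{eq:2Dr} its derivative is $O(R^{-1})\int_{r\sim R}|\phi|^2$ plus a nonlinear term that is integrable in time by the previous bound, since the mass-current flux involves $\Re\phi_1\overline{\phi_2}$ localized where we have space-time control. Integrating between $t_n$ and $t$ then yields \eqref{limit2D_m-}. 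Nonexistence of small standing solitons and breathers follows immediately, since such solutions have nonvanishing local mass.
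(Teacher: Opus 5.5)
Your overall architecture (a bounded virial functional, space-time integrability, then a localized mass functional along a sequence $t_n$) matches the paper's. However, there is a genuine gap at the coercivity step, and that step is the core of the argument. A first-order current-type functional does not produce $S(S+1)/r^2$-type terms. The angular terms enter linearly, and with opposite signs on the two components: your displayed identity has $S|\phi_1|^2+(S+1)|\phi_2|^2$, whereas the correct structure is $S|\phi_1|^2-(S+1)|\phi_2|^2$. (Also, $\Im$ of the real quantity $\phi_1\overline{\phi_2}+\overline{\phi_1}\phi_2$ is zero; the meaningful choice is $\Im\int\vA\,\phi_1\overline{\phi_2}\,r\,dr$.) Using \eqref{eq:2Dr} with $m=0$ and $W=0$ one finds
\[
\frac{d}{dt}\Im\int_0^\infty\vA\,\phi_1\overline{\phi_2}\,r\,dr=\int_0^\infty\Big[\tfrac12 r\vA'+\big(S+\tfrac12\big)\vA\Big]|\phi_1|^2\,dr+\int_0^\infty\Big[\tfrac12 r\vA'-\big(S+\tfrac12\big)\vA\Big]|\phi_2|^2\,dr .
\]
The real-part version $\Re\int\vA\,\phi_1\overline{\phi_2}\,r\,dr$ yields only momentum densities $\Im(\overline{\phi_j}\partial_r\phi_j)$ and no $|\phi|^2$ term at all. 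Both coefficients above are nonnegative only if $r\vA'\geq|2S+1|\,|\vA|$ everywhere. For a nontrivial weight with $\vA(0)=0$ this forces $\vA\gtrsim r^{|2S+1|}\geq r^3$ at infinity, and then your functional is no longer controlled by $E(\delta)$. For bounded weights such as $\tanh(r/\lambda)$, one diagonal coefficient is negative at large $r$, and no Cauchy--Schwarz on cross terms can repair that. In the massive case, $2m\int\vA\,\Re(\phi_1\overline{\phi_2})\,r\,dr$ is a sign-indefinite quadratic term of size $O(|m|)$, not an error term. The correctors you suggest generate no $\Re(\phi_1\overline{\phi_2})$ term that could cancel it. Finally, your tail bound $\varepsilon^{p-1}\lambda^{-\delta}\|\langle r\rangle^{\delta/2}\phi\|^2$ with fixed $\lambda$ is small but not integrable in time.

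The missing idea is the paper's second-order virial, which mimics the Klein--Gordon virial on the real components without squaring the equation. For example, $\K_1=\int(\vA\partial_r\phi_{11}+\frac12\vA'\phi_{11})(\partial_t\phi_{11}-W_{12})$, together with $\wK_1$, $\K_2$, $\wK_2$, all integrated in $dr$ without the volume factor. Differentiating in time produces $(\partial_r+\frac{S+1}{r})(\partial_r-\frac{S}{r})=\partial_r^2+\frac1r\partial_r-\frac{S^2}{r^2}$ and $(\partial_r-\frac{S}{r})(\partial_r+\frac{S+1}{r})=\partial_r^2+\frac1r\partial_r-\frac{(S+1)^2}{r^2}$. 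Claim~\ref{Cl1} turns these into $-\int(\vA'-\vA/r)|\nabla\phi|^2$ plus zeroth-order terms with coefficients $4K^2\vA/r^3-\vA'/r^2+\vA''/r-\vA'''$, for $K\in\{S,S+1\}$. With $\vA=r^{3+\delta}/(1+r)^2$ these are bounded below by a multiple of $r^{\delta}(1+r)^{-2}$, since the leading factor is $4K^2-1$ up to $O(\delta)$ corrections, and this is positive exactly when $K\neq0$; that is where $S\neq0,-1$ enters. In $\K=\K_1+\wK_1-\K_2-\wK_2$ the mass cancels from the quadratic part and survives only in $mN_1$. That term carries the weight $\vA\sim r^{1+\delta}$ instead of $\vA/r\sim r^{\delta}$, and paying for this extra power of $r$ through Strauss' bound $|\phi|\lesssim\varepsilon(1+r)^{-1/2}$ is what raises $p_*$ from $3$ to $5$. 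The bound $|\K|\lesssim\int(|\nabla\phi|^2+|\phi|^2)r^{1+\delta}\,dr$ follows from $E(\delta)$, and your final step then coincides with the paper's argument using $\mathcal H(t)=\int\frac{r}{(1+r)^3}|\phi|^2\,dr$.
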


Notice that the condition $\sup_{t\geq 0} \|{\bf \phi}(t)\|_{E(\delta)} \ll 1$ for any $\delta>0$ does not ensure $L^\infty$ control on the solution, therefore, some $L^\infty$ control is also necessary. 

The hypothesis $\sup_{t\geq 0} \|{\bf \phi}(t)\|_{L^\infty} \ll 1$ is ensured if, for instance, one has $\sup_{t\geq 0} \|{\bf \phi}(t)\|_{H^{1+}} \ll 1$.

It is interesting to compare \eqref{limit2D_m-} with the results proved in \cite{BF_decay}. In this paper, Borrelli and Frank established a classification for the ground state solutions as well as the excited states for the massless 2D Dirac equation with honeycomb potential (characterized by cubic nonlinearities), using the ansatz \eqref{eq:psi1}. In particular, they provided explicit asymptotic behavior for such solutions in terms of the vorticity $S\in \bb{Z}$, and the quantity $S+1/2$. For example, when $S+1/2>0$, $(\phi_1,\phi_2)$ behaves like $(r^{S},r^{3S+1})$ near zero, and like $(r^{-(3S+2)},r^{-(S+1)})$ as$r\to \infty$. On the other hand, when $S+1/2<0$, $(\phi_1,\phi_2)$ behaves like $(r^{-(3S+2)},r^{-(S+1)})$ near zero, and like $( r^{S},r^{3S+1})$ as $r\to \infty$ (see \cite[Theorem 1.4]{BF_decay} for more details). Recalling \eqref{eq:FW_Dirac} and choosing $\beta_1=2~{},\beta_2=1$, the authors obtained an explicit form for the excited states, under the condition \new{that} $r^{1/2}\phi_1(r)$ and $r^{1/2}\phi_2(r)$ vanish as  $r\to 0$. These real solutions are given by
\[
\phi_1(r)=\tau \lambda^{-1/2}V \left( \frac{r}{\lambda} \right) \quad\mbox{ and }\quad \phi_2(r)=\sigma \lambda^{-1/2}U \left( \frac{r}{\lambda} \right),
\]
where
\[
V(r)=\frac{ (2|2S+1|)^{\frac12} r^{-(S+1)}}{r^{2S+1}+r^{-(2S+1)}}\quad \mbox{and} \quad U(r)=r V(r),
\]
for \new{any} $\lambda>0$ (i.e. $\lambda$ is arbitrary), with $\sigma=\tau=1$ if $2S+1>0$, and $\sigma=-\tau=1$ if $2S+1<0$ (see \cite[Theorem 7.1]{BF_decay}).  We observe that the above solutions are not sufficiently small in the $L^\infty\cap L^2$ norm, independent of the value of $\lambda$. Working in a ``weak'' weighted Sobolev space (see \eqref{eq:weight} and \eqref{eq:varphi}), we are able to treat the critical 2D Dirac equation, i.e., the cubic massless, and the quintic pure power nonlinearity in the massive case. The method, as we shall explain in detail below, does not use the Klein-Gordon trick, but instead attacks directly Dirac equations. The additional decay assumption ensures that the virial functionals are well-defined (see \eqref{eq:varphi}).  While this condition slightly deviates from the classical Sobolev framework, it is primarily believed that it is a technical condition. In any case, Theorem \ref{thm:2D_m-} considers solutions outside the Borrelli-Frank framework. 

Let us remark that the decay properties of the 2D \emph{linear} Dirac equation with a potential have also been extensively studied. Fundamental decay results via virial identities were proved in \cite{BDAF}, in the case of linear Dirac models with magnetic potentials. Cacciafesta and S\'er\'e \cite{CS_2016}, using a spherical harmonics decomposition in the 2D and 3D cases, 
 proved local smoothing estimates for the massless Dirac equation with a Coulomb potential. Later,  Cacciafesta and Fanelli \cite{CF_} extended this work, proving local smoothing and weighted Strichartz estimates for the Dirac equation with an Aharonov-Bohm potential. Furthermore, Erdo\u{g}an and Green \cite{EG}, studied dispersive estimates for the 2D Dirac equation with a potential, obtaining that the Dirac evolution satisfies a $t^{-1}$ decay rate as an operator from the Hardy space $H^1$ to $BMO$, the space of functions of bounded mean oscillation. In \cite{Cacciafesta}, the author obtained a virial identity for an $n$-dimensional linear Dirac equation perturbed with a magnetic potential, and then  used it to deduce smoothing and Strichartz estimates for the case where $n\geq 3$. The work \cite{DAFS} studied the spectral properties  of the $n$-dimensional massive Dirac operator, for  $n\geq 2$, perturbed by a potential $V$. 
They proved that the eigenvalues are contained in the union of two disjoint disks in the complex plane, provided $V$ is sufficiently small in certain mixed norms. Furthermore, they showed that under the same smallness condition, the discrete spectrum is empty in the massless case.

\medskip

Now we establish a second, less demanding result for the case where the weighted condition \eqref{eq:weight} is no longer required, and we only work in the space $H^1\cap L^\infty.$ The lack of $L^\infty$ control for data only in $H^1$ (even in the radial case) motivates the use of the mixed $H^1-L^\infty$ control. 

\begin{thm}\label{thm:2D_m+}
 Assume that $W=(W_1,W_2)$ obeys \eqref{eq:W_2D} with
\begin{enumerate}[label=$($\subscript{H}{\arabic*}$)$]
\item \label{thm:+_massless_Strong} $p_{*}=5$ in the massless case:
\item \label{thm:+_massive}  $p_{*}= 7$ in the massive case.
\end{enumerate}
There exists $\varepsilon>0$ such that the following is satisfied: Let 
\[
{\bf \phi}=(\phi_1,\phi_2)\in C^1_{loc} \left( \R: L^2(\R^2;\C)^2 \right)\cap C_{loc} \left( \R: H^1(\R^2;\C)^2\right),
\] 
be any radial global solution to the 2D Dirac equation \eqref{eq:2D}-\eqref{eq:psi1}, with vorticity $S\in \bb{Z}\setminus\{-1,0\}$, and $\sup_{t\geq 0} \|{\bf \phi}(t)\|_{H^1\cap L^\infty} <\varepsilon$.
Then, for  any $R>0$,
\begin{equation}\label{limit2D}
\lim_{t\to \infty} \| {\bf \phi} (t)\|_{L^2(B(0,R))}=0.
\end{equation}
Therefore, \new{there exist} neither small standing solitons nor small breather solutions for the Dirac equation \eqref{eq:2D} within the ball $|x|<R$.
\end{thm}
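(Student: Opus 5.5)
The plan is to prove Theorem \ref{thm:2D_m+} with a virial argument applied directly to the radial system \eqref{eq:2Dr}, without passing to a Klein--Gordon equation. Write $\phi_j=\phi_{j1}+i\phi_{j2}$ and split \eqref{eq:2Dr} into four real equations. Fix a bounded weight $\varphi(r)=\tanh(r/\lambda)$, or $\varphi(r)=\frac{r}{\langle r\rangle}$ rescaled by $\lambda\gg1$, with $\varphi'>0$ and $\varphi'\sim\lambda^{-1}\mathrm{sech}^2(r/\lambda)$. The main functional is a momentum-type quantity adapted to the Dirac algebra:
\[
\mathcal I(t)=\int_0^\infty \varphi(r)\,\Im\big(\overline{\phi_1}\,\partial_r\phi_1+\overline{\phi_2}\,\partial_r\phi_2\big)\,r\,dr ,
\]
possibly corrected by a multiple of the cross term $\int \varphi\,\Re(\overline{\phi_1}\phi_2)\,r\,dr$. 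Because $\varphi$ is bounded, the $H^1$ bound controls $\mathcal I$, $|\mathcal I(t)|\lesssim \|\phi\|_{H^1}^2\lesssim\varepsilon^2$, and no weighted space $E(\delta)$ is needed. This is exactly what separates Theorem \ref{thm:2D_m+} from Theorem \ref{thm:2D_m-}: there $\varphi$ grows like $r^{\delta}$.

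The first step is to compute $\frac{d}{dt}\mathcal I$. The antisymmetric structure of the radial Dirac operator should produce a quadratic form containing the following pieces:
\begin{itemize}
\item a good term $\int \varphi'(|\partial_r\phi|^2+\dots)\,r\,dr$;
\item centrifugal contributions $\int \frac{\varphi}{r}\big(\frac{S^2}{r^2}|\phi_1|^2+\frac{(S+1)^2}{r^2}|\phi_2|^2\big) r\,dr$ coming from the $\frac{S}{r}$ and $\frac{S+1}{r}$ terms;
\item error terms carrying $\varphi''$ and $\varphi'/r$.
\end{itemize}
The assumption $S\neq0,-1$ is used precisely here. Both $S^2\ge1$ and $(S+1)^2\ge1$, so a Hardy-type coercivity holds for both components, which gives control of $\int \frac{\varphi'}{r^2}|\phi|^2$ with the natural 2D measure. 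The mass terms $\pm m\phi$ cancel in the quadratic part after symmetrization, or at worst produce a term $m\int\varphi'\Re(\overline{\phi_1}\phi_2)$. If that term appears, I would absorb it by adding the cross-term correction to $\mathcal I$, at the price of a worse nonlinear power. This explains the gap $p_*=5$ versus $p_*=7$: the correction functional generates nonlinear terms of one degree lower relative to the good terms.

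The second step, which I expect to be the main obstacle, is the nonlinear terms. These are $\int\varphi\,\Re(\overline{W}\partial_r\phi)\,r\,dr$ and similar, together with the terms where $\partial_r$ falls on $W$, controlled by \eqref{eq:dW_2D}. Integrating by parts moves the derivative onto $\varphi$, giving $\int \varphi'|\phi|^{p+1}r\,dr$ plus $\int\frac{\varphi}{r}|\phi|^{p+1}r\,dr$. The first piece is bounded by $\|\phi\|_{L^\infty}^{p-1}\int\varphi'|\phi|^2$, absorbed using $\varepsilon$ small. The second piece, with weight $\varphi/r\sim\lambda^{-1}$ near the origin but only $1/r$ far out, is the delicate one. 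My first attempt is a radial Strauss/Gagliardo bound $|\phi(r)|^2\lesssim r^{-1}\|\phi\|_{L^2}\|\partial_r\phi\|_{L^2}$, which extracts extra powers of $r^{-1}$ to make the far region summable against the decaying Hardy weight. I would use enough powers of $\|\phi\|_{L^\infty}$ to reach a weight comparable to $\varphi'/r^2$ times $\varepsilon^{p-3}$. The counting of these powers is what I expect to force $p\ge5$ in the massless case and $p\ge7$ in the massive case. The resulting estimate is
\[
\frac{d}{dt}\mathcal I\gtrsim \frac1\lambda\int \mathrm{sech}^2(r/\lambda)\Big(|\partial_r\phi|^2+\frac{|\phi|^2}{r^2}\Big)r\,dr - C\lambda^{-2}\|\phi\|_{H^1}^2\cdot(\text{localized}).
\]
I would handle the last error by choosing $\lambda$ large, or by absorbing it with Hardy.

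The third step concludes. Integrating in time and using boundedness of $\mathcal I$ gives
\[
\int_0^\infty\!\!\int \mathrm{sech}^2(r/\lambda)\frac{|\phi|^2}{\langle r\rangle^2}\,r\,dr\,dt<\infty ,
\]
so along a sequence $t_n\to\infty$ we have $\|\phi(t_n)\|_{L^2(B(0,R))}\to0$. To upgrade to the full limit, I would use a localized mass functional $\mathcal M(t)=\int \mathrm{sech}^2(r/R')|\phi|^2 r\,dr$. Its derivative is bounded by the same integrable local quantity, by Cauchy--Schwarz and the equation, since $\frac{d}{dt}|\phi|^2$ is a divergence up to nonlinear terms that vanish by the gauge symmetry. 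Hence $\mathcal M(t)\to0$ along all times, which gives \eqref{limit2D}. Nonexistence of small breathers and standing solitons then follows, since such solutions would have time-periodic or constant, nonzero local mass.
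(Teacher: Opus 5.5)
\textbf{The gap is in your first step, and everything after it depends on it.} For a first-order system, the momentum functional $\mathcal I$ cannot produce a coercive $\int\varphi'|\partial_r\phi|^2$ term.

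Write the linear massless flow as $\partial_t\phi_1=-i(\partial_r+\frac{S+1}{r})\phi_2$, $\partial_t\phi_2=i(\partial_r-\frac{S}{r})\phi_1$. Then
\[
\frac{d}{dt}\,\Im\int\varphi\,\overline{\phi_j}\,\partial_r\phi_j\,r\,dr=2\Im\int\varphi\,\overline{\partial_t\phi_j}\,\partial_r\phi_j\,r\,dr-\Im\int(\varphi r)'\,\overline{\phi_j}\,\partial_t\phi_j\,dr .
\]
The second-order contributions from $j=1,2$ add up to $2\Re\int\varphi\,(\partial_r\overline{\phi_2}\,\partial_r\phi_1-\partial_r\overline{\phi_1}\,\partial_r\phi_2)\,r\,dr=0$. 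They cancel identically. Equivalently, $i[\mathcal H,A]$ is only first order, because $A$ has scalar principal symbol.

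What remains is an indefinite first-order bilinear form, essentially a localized Dirac energy density. A wave packet of frequency $N$ placed where $\varphi'>0$ gives $\int\varphi'|\partial_r\phi|^2\,r\,dr\sim N^2$ while $\frac{d}{dt}\mathcal I=O(N)$. So no inequality of the kind you write can hold. Likewise, the centrifugal terms with $S^2$ and $(S+1)^2$ arise only from second-order operators.

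The cross-term correction does not rescue this. In the linear massless case,
\[
\frac{d}{dt}\int\varphi\,\Im(\overline{\phi_1}\phi_2)\,r\,dr=-\frac12\int\varphi'|\phi|^2\,r\,dr+\big(S+\tfrac12\big)\int\varphi\,\big(|\phi_2|^2-|\phi_1|^2\big)\,dr .
\]
Its angular part is indefinite, and for a bounded concave $\varphi$ (where $\varphi/r\ge\varphi'$) it is not controlled by the $\varphi'$ term. This is why current identities of this kind only give decay outside the light cone, as in \cite{HMM}. Your explanation of the gap $p_*=5$ versus $p_*=7$ via the correction functional therefore has no basis either.

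\textbf{The missing idea is second-order (Klein--Gordon type) virials, written directly on the real Dirac components.} The paper takes
\[
\mathcal J_1=\int\big[\varphi\,\partial_r\phi_{11}+\tfrac12\varphi'\phi_{11}\big]\big((\partial_r+\tfrac{S+1}{r})\phi_{22}-m\phi_{12}\big),
\]
which is $\int(\varphi\partial_r u+\frac12\varphi' u)\,\partial_t u$ with $u=\phi_{11}$, up to nonlinear terms, together with its three analogues. Differentiating in time brings in $(\partial_r+\frac{K+1}{r})(\partial_r-\frac{K}{r})=\partial_r^2+\frac1r\partial_r-\frac{K^2}{r^2}$ and its partner (Claim \ref{Cl1}).

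The signed sum $\mathcal J_1+\widetilde{\mathcal J}_1-\mathcal J_2-\widetilde{\mathcal J}_2$ cancels the cross and mass contributions. It leaves $-\int(\varphi'-\frac{\varphi}{r})|\nabla\phi|^2$ plus a potential with coefficient proportional to $4K^2-1$, $K\in\{S,S+1\}$. This is where $S\neq0,-1$ enters.

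With $\varphi=\frac{r^3}{(1+r)^2}$ and integrals in $dr$, the ratio $\varphi/r$ is bounded, so $|\mathcal J|\lesssim\|\phi\|_{H^1}^2$. The thresholds then come from Strauss' bound $|\phi|\lesssim\varepsilon(1+r)^{-1/2}$. The condition $p\ge5$ comes from $N_2,N_3,N_4$. The condition $p\ge7$ comes from $mN_1$, whose weight is $\varphi$ rather than $\varphi/r$.

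Your third step matches the paper's use of $\mathcal H(t)=\int_0^\infty\frac{r}{(1+r)^3}|\phi|^2\,dr$, and is fine once a genuine coercive bound is available. One correction there: $\Im(\overline{\phi_1}W_1+\overline{\phi_2}W_2)$ does not vanish under the stated hypotheses. It is only $O(\varepsilon^{p-1})$ times the local norm, which is still enough.
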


Recall that the above results complement those proved in \cite{HMM},  for the exterior region of the multidimensional ``light cone''. The system studied there was
\begin{equation}\label{eq:dirac_1}
\begin{aligned}
i\partial_{t} \psi =&~{}\mathcal{H} \psi+ (m+V(x,\psi)) \beta \psi\\
\psi(0,x)=&~{}\psi_{0}(x),
\end{aligned}
\end{equation}
for the case of general dimension $n\geq 1$.  Fix $j\in \{1,\dots, n\}$. Define the time-depending interval 
\begin{equation}\label{eq:Ijb}
\mathcal{I}_{b}^{j}(t)=\big\{ x\in \R^{n}~{} \vert {}~  |x_j|\geq (1+b)t \big\}, \quad t >2, \quad b>0.
\end{equation}

\begin{thm}[{\cite[Theorem 1.5]{HMM}}]
Let $ m \in \R$ and $\psi\in C(\R; L^2(\R^n;\C^N))$ be a global solution to the Dirac equation \eqref{eq:dirac_1} with a potential $V$ such that
 \begin{equation*}
    V(x,z)\in \R, \mbox{ with } (x,z)\in\R^n\times \C^N,
 \end{equation*}
 and $b$ is an arbitrary positive number.
Then, on the region $\mathcal{I}_{b}^j(t)$ (see \eqref{eq:Ijb}), there is a strong decay to zero of the charge, i.e.  
\begin{equation*}
\lim_{t\to\infty}  \underset{~{}\mathcal{I}^j_{b}(t)}{\int} \psi^{\dagger}(t,x)\psi(t,x)dx
=0 \quad \mbox{ for all } j\in\{1,\dots,n\}.
\end{equation*}
Moreover, there does not exist any non-decaying in time solution for the Dirac equation concentrated inside the region $\mathcal{I}^j_b(t)$.
\end{thm}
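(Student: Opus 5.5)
The plan is to exploit the local conservation law for the charge density $\rho:=\psi^\dagger\psi$ together with the fact that the Dirac flux propagates at speed at most one. A monotone weighted charge, with a weight translating at a speed strictly larger than $1$, then forces decay in $\mathcal I_b^j(t)$. No smallness and no sign condition beyond $V\in\R$ should be needed.

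\emph{Step 1 (local conservation law).} Since $\mathcal{H}=-i\alpha^k\partial_k$ with Hermitian $\alpha^k$, and $\beta$ is Hermitian with $V$ and $m$ real, we compute for smooth solutions
\[
\partial_t(\psi^\dagger\psi)+\partial_k(\psi^\dagger\alpha^k\psi)=0 .
\]
The key point is that $\psi^\dagger (m+V)\beta\psi$ is real, so the potential term cancels in $2\Re(\overline{\psi}\cdot\partial_t\psi)$. Set $J_k:=\psi^\dagger\alpha^k\psi$. Because $(\alpha^k)^2=I$ and $\alpha^k$ is Hermitian, its eigenvalues are $\pm1$, hence $|J_k|\le \rho$ pointwise. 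For a solution that is only in $C(\R;L^2)$, I would first justify the identity through a smooth approximation of the data or a mollification in $x$. Only the integrated identity against Lipschitz weights is used below, and by the above it is continuous in $L^2$. This regularization step is the only technical obstacle I foresee; the mass $M=\int\rho$ is conserved.

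\emph{Step 2 (moving weight).} Fix a smooth nondecreasing $\eta:\R\to[0,1]$ with $\eta=0$ on $(-\infty,0]$ and $\eta=1$ on $[1,\infty)$, and set $v:=1+b/2$. For $R>0$ define
\[
I_R(t)=\int_{\R^n}\eta(x_j-vt-R)\,\rho(t,x)\,dx .
\]
Using Step 1 and integrating by parts,
\[
I_R'(t)=\int \eta'(x_j-vt-R)\,\big(J_j-v\rho\big)\,dx\le -\frac b2\int\eta'(\cdot)\,\rho\le 0 ,
\]
since $\eta'\ge0$ and $J_j\le\rho$. Hence $I_R(t)\le I_R(0)=\int\eta(x_j-R)\rho(0,x)\,dx$ for all $t\ge0$.

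\emph{Step 3 (conclusion).} Given $t>2$, choose $R=R(t):=\tfrac b2 t-1$. On $\{x_j\ge(1+b)t\}$ we have $x_j-vt-R\ge 1$, so $\eta=1$ there. Therefore
\[
\int_{x_j\ge(1+b)t}\rho(t)\le I_{R(t)}(t)\le\int_{x_j\ge R(t)}\rho(0,x)\,dx ,
\]
and the right-hand side tends to $0$ as $t\to\infty$ by dominated convergence, because $\rho(0)\in L^1$. The half-space $\{x_j\le-(1+b)t\}$ is handled identically with the weight $\eta(-x_j-vt-R)$, using $-J_j\le\rho$. Together these give the claimed decay on $\mathcal I_b^j(t)$ for every $j$.

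The final assertion follows directly. Any solution whose charge stays concentrated, for a sequence of times, in $\mathcal I_b^j(t)$ with a non-vanishing amount of charge contradicts the limit just proved. In particular, no non-decaying solution can live there.
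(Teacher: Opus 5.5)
Your proof is correct. This paper gives no proof of the statement; it is quoted from \cite{HMM}, whose method the introduction describes as weighted virial identities for the Dirac flow. Your argument is exactly the simplest such identity. The localized charge $\int\eta(x_j-vt-R)\,\psi^\dagger\psi$ has derivative $\int\eta'\,(\psi^\dagger\alpha^j\psi-v\,\psi^\dagger\psi)\le0$, for two reasons: the reality of $m+V$ removes the potential from the charge balance, and $(\alpha^j)^2=I$ gives $|\psi^\dagger\alpha^j\psi|\le\psi^\dagger\psi$. Taking $v=1+b/2$ and $R=\tfrac{b}{2}t-1$ (with the reflected weight for the other half-space) then bounds the charge in $\{\pm x_j\ge(1+b)t\}$ by the tail $\int_{\pm x_j\ge R}|\psi(0)|^2$ of the initial charge. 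This bound is quantitative and needs no smallness, no time-averaging and no extraction of a sequence $t_n\to\infty$.

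The one point to make precise is the justification of $\partial_t(\psi^\dagger\psi)+\partial_k(\psi^\dagger\alpha^k\psi)=0$ for $\psi\in C(\R;L^2)$. Approximating the data is not available in this generality, since there is no $L^2$ stability theory for an arbitrary real $V(x,\psi)$. Mollification in $x$ does work: the extra term $2\Im\big((\chi_\epsilon*\psi)^\dagger\,\chi_\epsilon*(V\beta\psi)\big)$ converges to $2\Im(V\,\psi^\dagger\beta\psi)=0$ as soon as, for example, $V(x,\psi)\psi\in L^1_{\mathrm{loc}}(\R;L^2)$. Some integrability of this kind is needed anyway for the nonlinear term to make sense. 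So this is a matter of making the notion of solution explicit rather than a gap in your argument.
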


In other words,  for $V:\R^n\times\C^{N} \to \R$,  every solution $\psi$ belonging to $L^2$ converges to zero in the $L^2$ norm in the exterior region of the multidimensional ``light cone''.

\subsection{Comparison with the 3D case}\label{comparison} 
Notice that the system \eqref{eq:2Dr} looks similar to the one studied in \cite{HMM}. By replacing the factors $S+1$ and $S$ with 2 and 0, respectively, one recovers the 3D radial case of the nonlinear Dirac equation in \cite{HMM}. However, there is no direct translation of \eqref{eq:2Dr} into a corresponding 3D radial version of the nonlinear Dirac equation. On the other hand, in the model studied in the present work, intrinsic difficulties arise from the weak spatial decay present in two dimensions, which makes the 3D case comparatively easier.

In comparison to the 3D case, the weaker decay in 2D forces us to impose stronger hypotheses in order to treat the cubic and quintic nonlinearities in the massless and massive cases, respectively (see \eqref{eq:weight} in Theorem \ref{thm:2D_m-}). The hypotheses mentioned above can be interpreted as technical requirements arising only in the low-order nonlinear cases; specifically, 
\new{an integrability condition of order $r^{\delta}$, for some $\delta>0$, is required.} We believe that this condition could be improved to order $\log(r+1)$. However, we must remark that for higher-order nonlinearities, namely those greater than or equal to quintic and septic in the massless and massive cases, respectively, the additional hypothesis is no longer required. Under these conditions, we recover the analogous version of the established theorem in the 3D case for the cubic nonlinearity in \cite{HMM}.

\subsection{Method of proof}\label{MoP}

Mathematically speaking, \new{virial identities motivated by decay} in the radial 2D case, as well as in the 3D case, present better spectral properties than in the 1D case \cite{MoMu,HMM}, making more quantitative the classical argument ''the bigger the dimension, the better the linear decay''. This is reflected in the fact that one can construct suitable virial weights under which local decay is achieved, and this procedure fails in 1D by classical counterexamples (breathers). In this paper, after a suitable decomposition of $\phi$ in components $(\phi_{11},\phi_{12},\phi_{21},\phi_{22})$, we will consider 2D radial virial functionals of the form 
\[
\begin{aligned}
\K_{1}=&~{}\int \left[ \vA  \partial_r \phi_{11} +\frac12 \vA ' \phi_{11} \right]\left(  \left( \partial_r +\frac{S+1}{r}\right)  \phi_{22}-m\phi_{12}  \right),
\\
\wK_{1}=&~{}\int \left[ \vA  \partial_r \phi_{22} +\frac12 \vA ' \phi_{22} \right]\left( \left( \partial_r -\frac{S}{r}\right) \phi_{11}-m\phi_{21}
 \right),
\end{aligned}
\]
with vorticity $S\neq -1,0$ and  weight $\vA  =\frac{r^{3}}{(1+r)}$ (in the case of strong nonlinearity). This weight is motivated by several key interactions: at short distances 
\new{it is stronger than the natural energy weight (the volume element $r\,dr$) by two powers of $r$, that is, it behaves like $r^{3}$ instead of $r$ near the origin}, and at large distances it joins the natural decay in the energy space. See \cite{AC_NLS_NonExistence,ACKM} for other previous works using virial identities of this type. 
\new{They are complemented} by two additional virials $\K_{2}$ and $\wK_{2}$.  The combination of these four functionals, together with a carefully chosen weight $\vA$ which exhibits a strong decay at $r=0$ (order $r^3$) but an additional behaves at infinity like $r$ (the natural volume element in 2D),  will help us to describe the asymptotic behavior locally in space dynamics of small globally bounded 2D Dirac massless and massive waves. In this framework, it is proved \eqref{limit2D}. A remarkable aspect of Theorem \ref{thm:2D_m+} is that, although the massive case is generally expected to have better decay properties, our analysis shows that it actually requires a higher-order nonlinearity to conclude. This may be related to the possible existence of internal modes introduced by the weights used in virial identities (which are large perturbations in several cases) and that may disturb the natural decay.
\new{
The strategy can be summarized as follows. We combine the four virial functionals into $\K$ and compute its time derivative, for the high-order nonlinearities treated in Section~\ref{sec:thm+}, we establish the uniform-in-time pointwise bound (see \eqref{final_K+})
\[
-\frac{d}{dt}\K(t)\gtrsim \int_0^\infty\frac{r^2}{(1+r)^3}|\nabla\phi(t,r)|^2\,dr
+\int_0^\infty\frac{1}{(1+r)^2}|\phi(t,r)|^2\,dr,
\]
where the implicit constants are independent of $t$. Since $\K(t)$ remains uniformly bounded for the solutions we consider (see the bounds in Subsection~\ref{sec:thm+}), integrating the above inequality over $[0,\infty)$ shows that the right-hand side belongs to $L_{t}^2(I;L^2_r(0,\infty))$. Consequently, the quadratic terms must vanish along a sequence $t_n\to\infty$. In Section~\ref{Section:2D} we relate this vanishing to the weighted mass functional $\mathcal H(t)$ defined in \eqref{mathH}, which allows us to conclude that $\|\phi(t_n)\|_{L^2_{\rm loc}}\to 0$. A final monotonicity argument then yields the full local $L^2$ decay to zero as $t\to\infty$, proving \eqref{limit2D_m-} and \eqref{limit2D}. The low-order case in Section~\ref{sec:thm-} follows the same pattern, using the analogous weighted bound \eqref{final_K}.
}
\begin{rem}\label{comS}
\new{In the proof, the assumption $S\neq0,-1$ plays a key role in the virial identities. The quadratic part controlling each pair of components carries a coefficient proportional to $4K^2-1$, with $K=S$ for $(\phi_{11},\phi_{12})$ and $K=S+1$ for $(\phi_{21},\phi_{22})$ (see Section~\ref{sec:thm+} and \eqref{quadratic_part}). This coefficient is strictly positive precisely when $K\neq0$, i.e.\ $S\neq0$ and $S\neq-1$; for $S=0$ or $S=-1$ one of these coefficients becomes $-1$, and the coercive sign that drives the decay argument is lost. This is the analytic reason behind the restriction $S\neq0,-1$.}
\end{rem}

\subsection*{Organization of this paper} 
This paper is organized as follows: Section \ref{sec:virial} presents some preliminary identities and estimates to deal with new virial identities introduced in this paper. Section \ref{sec:thm-} is devoted to giving the first steps on the proof of Theorem \ref{thm:2D_m-}. Section \ref{sec:thm+} deals with the first computations on the proof of Theorem \ref{thm:2D_m+} in the massless case. Finally, in Section \ref{Section:2D} we prove Theorem \ref{thm:2D_m-} and Theorem \ref{thm:2D_m+}.

\subsection*{Acknowledgments} 
Cl. Mu. thanks to CRC 1283 at Bielefeld University (Germany) for their support. He is also grateful to IMAG-BIRS and the Departamento de Matem\'aticas Aplicadas in Granada, Spain, for their hospitality, where part of this work was completed.

We thank the anonymous referees for their careful reports and valuable suggestions.

\section{Virial identities}\label{sec:virial}

\subsection{Preliminaries}
First of all, we claim the following useful auxiliary result. See
\cite{HMM} for a similar proof\new{; the proof of Claim~\ref{Cl1} is also included in Appendix~\ref{app:Cl1}}. \new{Here and in the sequel, unless otherwise stated, all integrals are taken over the half-line $(0,\infty)$ in the radial variable $r$ (without the $2$D volume factor $r\,dr$), as further discussed in the Remark on the absence of volume terms below.}
\begin{claim}\label{Cl1}
Let $K\in\mathbb R$, $\vA $ be a smooth bounded function such that $\vA (0)=0$ and let $f\in H^1$. It holds that
\[
\begin{aligned}
 \int \left[ \vA  \partial_r f +\frac12 \vA ' f \right] &   \left(\partial_r+\frac{K+1}{r} \right)\left(\partial_r -\frac{K}{r}\right)f \\
  &\quad =-\frac12 \int \left(2\vA' -\frac{2}{r} \vA \right)(\partial_r f)^2
 \\&\qquad 
 - \frac12 \int \bigg( 
 			2\vA \frac{K^2}{r^3}
 			+ \vA''  \frac{1}{2r}
			 -\vA '  \frac{1}{2r^2}
			- \frac12   \vA'''  
		\bigg) f ^2,
\end{aligned}
\]
and
\[
\begin{aligned}
 \int \left[ \vA  \partial_r f +\frac12 \vA ' f \right] &   \left(\partial_r-\frac{K}{r} \right)\left(\partial_r +\frac{K+1}{r}\right)f \\
=&\quad  -\frac12  \int    \left( 2\vA' -\vA \frac{2}{r} \right)(\partial_r f )^2
\\&\quad 
		-\frac12 \int    \bigg(  2\vA \frac{(K+1)^2}{r^3} 
		-  \frac12  \vA '''  
		+ \vA ''   \frac{1}{2r} 
		-\vA '   \frac{1}{2r^2}  
		 \bigg)  f^2.
\end{aligned}
\]
\end{claim}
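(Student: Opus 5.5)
The plan is to reduce both identities to a single computation by first expanding the operator products. For $f$ smooth, a direct computation gives
\[
\Big(\partial_r+\tfrac{K+1}{r}\Big)\Big(\partial_r-\tfrac{K}{r}\Big)f = f''+\tfrac{1}{r}f'-\tfrac{K^2}{r^2}f,
\qquad
\Big(\partial_r-\tfrac{K}{r}\Big)\Big(\partial_r+\tfrac{K+1}{r}\Big)f = f''+\tfrac{1}{r}f'-\tfrac{(K+1)^2}{r^2}f.
\]
The first-order cross terms cancel, and the zeroth-order terms combine into $-K^2$, respectively $-(K+1)^2$. So both statements are instances of one identity for
\[
I_c:=\int\Big[\vA f'+\tfrac12\vA'f\Big]\Big(f''+\tfrac1r f'-\tfrac{c}{r^2}f\Big),
\]
with $c=K^2$ or $c=(K+1)^2$. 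I will prove this identity for $f$ smooth and compactly supported in $(0,\infty)$, and then extend it to $f\in H^1$ by density.

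The main step is to integrate by parts term by term, always moving derivatives onto the weight. This splits into three groups of terms.
\begin{itemize}
\item Second-order terms. Using $\int\vA f'f''=-\frac12\int\vA'(f')^2$ and
\[
\tfrac12\int\vA' ff''=-\tfrac12\int\vA'(f')^2-\tfrac12\int\vA''ff' = -\tfrac12\int\vA'(f')^2+\tfrac14\int\vA'''f^2,
\]
these terms contribute $-\int\vA'(f')^2+\frac14\int\vA'''f^2$.
\item First-order terms. These give $\int\frac{\vA}{r}(f')^2$, together with
\[
\tfrac12\int\tfrac{\vA'}{r}ff'=-\tfrac14\int\Big(\tfrac{\vA''}{r}-\tfrac{\vA'}{r^2}\Big)f^2 .
\]
\item Potential terms. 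One has
\[
-c\int\tfrac{\vA}{r^2}ff'=\tfrac c2\int\Big(\tfrac{\vA'}{r^2}-\tfrac{2\vA}{r^3}\Big)f^2,
\]
and this is added to $-\frac c2\int\frac{\vA'}{r^2}f^2$. The $\vA'/r^2$ pieces cancel exactly, leaving $-c\int\frac{\vA}{r^3}f^2$.
\end{itemize}
Collecting everything gives
\[
I_c=-\tfrac12\int\Big(2\vA'-\tfrac2r\vA\Big)(f')^2-\tfrac12\int\Big(2c\tfrac{\vA}{r^3}+\tfrac{\vA''}{2r}-\tfrac{\vA'}{2r^2}-\tfrac12\vA'''\Big)f^2 .
\]
This is precisely the claimed formula with $c=K^2$, respectively $c=(K+1)^2$.

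The only delicate point, and what I expect to be the main obstacle, is justifying that the boundary terms vanish and extending from test functions to $f\in H^1$. At $r=\infty$, boundedness of $\vA$ and its derivatives together with decay of $f\in H^1$ kills the boundary terms after approximation. At $r=0$, the boundary contributions have the form $\vA(f')^2$, $\vA' ff'$, $\vA''f^2$, $\frac{\vA'}{r}f^2$ and $\frac{\vA}{r^2}f^2$. These vanish provided $\vA$ vanishes at the origin to sufficiently high order, as for the weights actually used later, e.g. $\vA=r^3/(1+r)$. The hypothesis $\vA(0)=0$ is used exactly here, and I would make explicit that $\vA/r^3$, $\vA''/r$ and $\vA'/r^2$ remain locally integrable against $f^2$ near $0$. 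With that in hand, a density argument passes the identity to general $f$ in the class considered, since every term on the right-hand side is continuous in the relevant weighted norm.
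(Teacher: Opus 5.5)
Your proposal is correct and follows essentially the same route as the paper. Both arguments reduce the two operator products to $f''+\frac1r f'-\frac{c}{r^2}f$ with $c=K^2$, respectively $c=(K+1)^2$, and then integrate by parts term by term; your intermediate identities all check out, and your boundary-term discussion goes beyond the paper's purely formal computation, rightly noting that $\varphi(0)=0$ alone does not suffice and that vanishing of order $r^3$ at the origin (as for the weights actually used) is what makes $\frac{\varphi}{r^3}f^2$ integrable and kills the boundary contributions.
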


The proof is standard, but it is included in Appendix \ref{app:Cl1} for completeness.

\medskip

Now, we recall a well-known result which will help us deal with the nonlinear terms; a version of this Lemma has been used in \cite{MoMu, HMM} in the 3D case.

\begin{lem}[Strauss]\label{lem2d}
Let $u\in H^1(\R^2)$ be a radial function. Then $u(r)\in L^2(0,\infty)$ and 
it holds
\[
\sup_{r\geq 0} |r^{1/2} u(r)|\leq C\|u\|_{H^1(\R^2)}.
\]
Moreover, if $\sup_{t\geq 0} \| u(t)\|_{H^1\cap L^\infty}  \leq \epsilon$, then
\[|u(t,r)| \lesssim \epsilon (1+r)^{-1/2}.\]
\end{lem}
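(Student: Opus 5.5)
The plan is to prove the three assertions of Lemma~\ref{lem2d} separately, all by elementary one-dimensional arguments in the radial variable. Throughout, $\|\cdot\|_{L^2}$ is the 2D norm \eqref{defn_L2}, i.e.\ with the volume element $r\,dr$ (up to the harmless factor $2\pi$). The one-dimensional space $L^2(0,\infty)$ is taken with respect to $dr$, as in Claim~\ref{Cl1}.

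\emph{Step 1: the weighted pointwise bound.} I will first take $u$ radial, smooth and compactly supported, and conclude by density of such functions in $H^1_{\mathrm{rad}}(\R^2)$. Pointwise convergence along a subsequence, or uniform convergence on $[r_0,\infty)$ for each $r_0>0$, lets the bound pass to the limit. For such $u$ the fundamental theorem of calculus gives $u(r)^2=-\int_r^\infty 2u(s)\partial_s u(s)\,ds$. Since $s\geq r$ on the domain of integration,
\[
r\,u(r)^2\le 2\int_r^\infty s\,|u(s)|\,|\partial_s u(s)|\,ds\le 2\|u\|_{L^2}\|\nabla u\|_{L^2}\lesssim \|u\|_{H^1(\R^2)}^2 ,
\]
by Cauchy--Schwarz in the measure $s\,ds$. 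Taking square roots and the supremum over $r>0$ yields $\sup_{r\geq0}|r^{1/2}u(r)|\le C\|u\|_{H^1}$. (For complex $u$ I apply the same computation to $|u|^2$, or to the real and imaginary parts.)

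\emph{Step 2: $u\in L^2(0,\infty)$ with respect to $dr$.} I will split the half-line at $r=1$.
\begin{itemize}
\item For $r\geq1$ we have $1\le r$, so $\int_1^\infty u^2\,dr\le\int_1^\infty u^2\,r\,dr\le\|u\|_{L^2}^2$.
\item Near the origin the weight $r$ is missing, and a pure $L^\infty$-type bound is not available. Instead I use the 2D Sobolev embedding $H^1(\R^2)\hookrightarrow L^q(\R^2)$ for every $q<\infty$, and fix some $q>4$. Writing $u^2=(u^2r^{2/q})\,r^{-2/q}$, Hölder with exponents $q/2$ and $q/(q-2)$ gives
\[
\int_0^1u^2\,dr\le\Big(\int_0^1|u|^q\,r\,dr\Big)^{2/q}\Big(\int_0^1 r^{-\frac{2}{q-2}}\,dr\Big)^{\frac{q-2}{q}}\lesssim\|u\|_{H^1(\R^2)}^2 .
\]
The last integral is finite precisely because $2/(q-2)<1$, that is, $q>4$.
\end{itemize}
Hence $\|u\|_{L^2(0,\infty;dr)}\lesssim\|u\|_{H^1(\R^2)}$. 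This is consistent with the fact that radial $H^1$ functions may blow up like $\sqrt{\log(1/r)}$ at the origin, which is still square-integrable in $dr$.

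\emph{Step 3: the uniform decay bound.} Assume $\sup_{t\ge0}\|u(t)\|_{H^1\cap L^\infty}\le\epsilon$. For each $t$, Step 1 gives $|u(t,r)|\le C\epsilon\,r^{-1/2}$, while the $L^\infty$ hypothesis gives $|u(t,r)|\le\epsilon$. Therefore
\[
|u(t,r)|\le\epsilon\min\{1,Cr^{-1/2}\}\lesssim\epsilon(1+r)^{-1/2},
\]
using the first bound for $r\le1$ and the second for $r\ge1$. The constants are independent of $t$.

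The only mildly delicate point is Step 2 near $r=0$, where the choice of $q>4$ in the Sobolev embedding is what makes the Hölder argument close. Everything else is routine.
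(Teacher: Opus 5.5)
Your proof is correct. The paper gives no proof of this lemma. It is quoted as the classical Strauss radial lemma, with a pointer to the 3D versions in \cite{MoMu,HMM}. Your Steps~1 and~3 are exactly the standard argument being invoked: the fundamental theorem of calculus with Cauchy--Schwarz in the measure $s\,ds$, then combining $|u|\le\epsilon$ on $r\le1$ with $|u|\le C\epsilon r^{-1/2}$ on $r\ge1$.

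Step~2 is where you go beyond the paper. Under the convention \eqref{defn_L2}, the assertion $u\in L^2(0,\infty)$ would be trivial. You read it with respect to $dr$, which is the nontrivial statement and is what the paper tacitly needs. The virial integrals are taken without the volume factor, and for the weight \eqref{eq:vA_HN} the quadratic coefficient is of order one at the origin. So terms like $\int(1+r)^{-2}|\phi|^2\,dr$ in \eqref{final_K+} are finite near $r=0$ only because of this integrability.

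Your Sobolev-embedding plus H\"older argument with $q>4$ is valid. A more elementary alternative uses only Step~1. For $0<r<1$, write
\[
|u(r)|\le |u(1)|+\Big(\int_r^1 s\,|\partial_s u|^2\,ds\Big)^{1/2}\Big(\log\tfrac1r\Big)^{1/2},
\]
bound $|u(1)|$ by Step~1, square, and use $\int_0^1\log\frac1r\,dr=1$. This avoids the embedding $H^1\hookrightarrow L^q$. It also turns your heuristic remark about logarithmic growth at the origin into an actual pointwise bound, $|u(r)|\lesssim\|u\|_{H^1}\big(1+\sqrt{\log(1/r)}\big)$.
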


\begin{rem}
In 2D, Strauss' lemma provides a weaker decay estimate compared to the 3D setting.
This lack of strong spatial decay forces us to impose stronger assumptions than in the 3D setting. This dimensional limitation is intrinsic to the structure of radial Sobolev embeddings in low dimensions.
\end{rem}

\subsection{Virial Identities}

Let us consider global radial solutions to \eqref{eq:2Dr}. Decompose $\phi_{j}=\Re \phi_j+i\Im \phi_j=:\phi_{j1}+i\phi_{j2}$. Rewriting the real and imaginary parts of the above system and rearranging equations, we obtain the following complex system
or corresponding  real-valued $4\times 4$ system
\begin{equation}\label{eq:SD2}
\begin{aligned}
\partial_t \phi_{11} = &~{}   \left( \partial_r +\frac{S+1}{r}\right)  \phi_{22}-m\phi_{12}   +W_{12} \\
\partial_t \phi_{22} = &~{}  \left( \partial_r -\frac{S}{r}\right) \phi_{11}-m\phi_{21} -W_{21} \\
\partial_t \phi_{12} = &~{} - \left( \partial_r +\frac{S+1}{r}\right) \phi_{21}+m\phi_{11}-W_{11} \\
\partial_t \phi_{21} = &~{}- \left( \partial_r -\frac{S}{r}\right)  \phi_{12}+m\phi_{22}+W_{22}.
\end{aligned}
\end{equation}

To prove decay as in \eqref{limit2D}, we shall introduce new virial functionals adapted to \eqref{eq:SD2}. These functionals are reminiscent of previous works on NLKG models, Schr\"odinger and wave maps, see e.g. \cite{KMM_Nonexistence_KG,EM20,AleMau,MoMu}, and the recent work on the Dirac equation in 1D and 3D \cite{HMM}.

Let us consider the virial functionals
\begin{equation}\label{eq:K1}
\begin{aligned}
\K_{1}=\int \left[ \vA  \partial_r \phi_{11} +\frac12 \vA ' \phi_{11} \right]\left(  \left( \partial_r +\frac{S+1}{r}\right)  \phi_{22}-m\phi_{12}  \right),
\end{aligned}
\end{equation}
and
\begin{equation}\label{eq:tK1}
\begin{aligned}
\wK_{1}=\int \left[ \vA  \partial_r \phi_{22} +\frac12 \vA ' \phi_{22} \right]\left( \left( \partial_r -\frac{S}{r}\right) \phi_{11}-m\phi_{21}
 \right).
\end{aligned}
\end{equation}
Similar to the 3D case, achieving complete control over the terms necessitates a second set of virial functionals for reconstructing the full local norm of the solution. Let
\begin{equation}\label{eq:def_K2}
\begin{aligned}
\K_{2}=\int \left[ \vA  \partial_r \phi_{12}+\frac12 \vA '  \phi_{12}\right] \left(   \left( \partial_r +\frac{S+1}{r}\right)\phi_{21}-m\phi_{11}\right),
\end{aligned}
\end{equation}
and
\begin{equation}\label{eq:def_tK2}
\begin{aligned}
\widetilde{\K}_{2}=\int \left[ \vA  \partial_r \phi_{21}+\frac12 \vA '  \phi_{21}\right]\left( \left( \partial_r -\frac{S}{r}\right)  \phi_{12}-m\phi_{22}\right).
\end{aligned}
\end{equation}

These are virial functionals adapted to the 2D setting. Notice that integrals are taken in the interval $(0,\infty)$. Therefore, the condition $\vA (0)=0$ is important to avoid undesirable boundary terms. See \cite{MoMu,HMM} for a detailed discussion on this point.

\begin{rem}[On the absence of volume terms] The reader may notice that each virial term in \eqref{eq:K1}-\eqref{eq:def_tK2} does not consider the volume term, which in 2D corresponds to a coefficient $r$ in front of $dr$. Although this will not change the main result, it has an important meaning in the forthcoming computations, in the sense that we let the problem  tell us which is the best weight for getting decay, without being influenced by the geometry of the problem. This choice has good and bad consequences, the bad one is that one later has to recover the energy estimates in terms of the weight $rdr$, however, the good point is to find the correct virial identity, and this goal is more important than recovering energy estimates, which are always performed later than virial estimates.
\end{rem}

We can now prove the first set of virial identities, which help us to control the components $\phi_{11}$ and $\phi_{22}$.
\begin{prop}\label{prop:K1}
Let $\vA $ be a smooth bounded function such that $\vA (0)=0$ \new{and $\vA'(0)=0$}. Under the assumptions on $\vA $ it holds that
\[
\begin{aligned}
\dt \K_1 
=&~{}
\int \left[ \vA   \partial_r W_{12} +\frac12 \vA ' W_{12}  \right]\left(  \left( \partial_r +\frac{S+1}{r}\right)  \phi_{22}-m\phi_{12} \right)
\\&\quad
+\int \left[ \vA  \partial_r \phi_{11} +\frac12 \vA ' \phi_{11} \right]
			\bigg( 
				- \left( \partial_r +\frac{S+1}{r}\right)  \left(W_{21}\right)
					+m W_{11}		
			 \bigg)
\\
&\quad 	
-\frac12 \int \left(2\vA' -\frac{2}{r} \vA \right)(\partial_r \phi_{11}	)^2
 \\&\qquad
 - \frac12 \int \bigg(
 			2\vA \frac{\new{S}^2}{r^3}
 			+ \vA''  \frac{1}{2r}
			 -\vA '  \frac{1}{2r^2}
			- \frac12   \vA'''
		\bigg) \phi_{11}^2,
\end{aligned}
\]
and
\[
\begin{aligned}
\dt \widetilde{ \K}_1  
=&~{}
-\int \left[ \vA  \partial_r \left( W_{21} \right) +\frac12 \vA ' \left(  W_{21} \right)\right]\left( \left( \partial_r -\frac{S}{r}\right) \phi_{11}-m\phi_{21}  \right)
\\& + \int \left[ \vA  \partial_r \phi_{22} +\frac12 \vA ' \phi_{22} \right]  \left( 
				 \left( \partial_r -\frac{S}{r}\right) \left( W_{12} \right)
				-m  W_{22} 
				 \right)
\\& 
 -\frac12  \int    \left( 2\vA' -\vA \frac{2}{r} \right)(\partial_r  \phi_{22}   )^2
\\&
		-\frac12 \int    \bigg(  2\vA \frac{(S+1)^2}{r^3} 
		-  \frac12  \vA '''  
		+ \vA ''   \frac{1}{2r} 
		-\vA '   \frac{1}{2r^2}  
		 \bigg)   \phi_{22}  ^2.
\end{aligned}
\]
Additionally,

\begin{equation*}
\begin{aligned}
\dt \K_{2}
=&- \int \left[ \vA  \partial_r \left( W_{11}\right)
+\frac12 \vA ' W_{11} \right] \left(   \left( \partial_r +\frac{S+1}{r}\right)\phi_{21}-m\phi_{11}\right)
\\&
-\int \left[ \vA  \partial_r \phi_{12}+\frac12 \vA '  \phi_{12}\right]
  \left(   
			\left( \partial_r +\frac{S+1}{r}\right) \left(-W_{22}\right)
			+m W_{12}
			\right)
\\&
+\frac12 \int \left(2\vA' -\frac{2}{r} \vA \right)(\partial_r  \phi_{12})^2
 \\&\qquad 
 + \frac12 \int \bigg( 
 			2\vA \frac{S^2}{r^3}
 			+ \vA''  \frac{1}{2r}
			 -\vA '  \frac{1}{2r^2}
			- \frac12   \vA'''  
		\bigg)  \phi_{12} ^2,
\end{aligned}
\end{equation*}
and
\begin{equation*}
\begin{aligned}
\dt\wK_{2}
=&\int \left[ \vA  \partial_r  \left( W_{22} \right)+\frac12 \vA '  W_{22}  \right]
 \left( \left( \partial_r -\frac{S}{r}\right)  \phi_{12}-m\phi_{22}\right)
\\&
-\int \left[ \vA  \partial_r \phi_{21}+\frac12 \vA '   \phi_{21}\right]
\left(
		 \left( \partial_r -\frac{S}{r}\right)\left( W_{11} \right)-m W_{21} 
		 \right)
\\&
 +\frac12  \int    \left( 2\vA' -\vA \frac{2}{r} \right)(\partial_r  \phi_{21} )^2
\\&
+\frac12 \int    \bigg(  2\vA \frac{(S+1)^2}{r^3} 
				-  \frac12  \vA '''  
				+ \vA ''   \frac{1}{2r} 
				-\vA '   \frac{1}{2r^2}  
			 \bigg)   \phi_{21}^2
.
\end{aligned}
\end{equation*}
\end{prop}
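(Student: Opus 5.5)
The plan is to differentiate each functional directly in time, substitute the real system \eqref{eq:SD2}, and reduce everything to Claim~\ref{Cl1} plus a few integrations by parts. We treat $\K_1$ in detail; the other three follow by the same pattern with the roles of the components exchanged.

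\textbf{Splitting the derivative of $\K_1$.} Write $\K_1=\int A\,B$, where
\[
A=\vA\partial_r\phi_{11}+\tfrac12\vA'\phi_{11},\qquad B=\left(\partial_r+\tfrac{S+1}{r}\right)\phi_{22}-m\phi_{12}.
\]
By \eqref{eq:SD2}, $B=\partial_t\phi_{11}-W_{12}$. Then
\[
\dt\K_1=\int(\partial_tA)B+\int A\,\partial_tB .
\]

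\textbf{The first term.} Here $\partial_t A=\vA\partial_r(\partial_t\phi_{11})+\tfrac12\vA'\partial_t\phi_{11}$, and $\partial_t\phi_{11}=B+W_{12}$. The $W_{12}$ contribution is exactly the first nonlinear line of the statement. The remaining piece is $\int(\vA\partial_rB+\tfrac12\vA'B)B$. After one integration by parts this equals $\int(\tfrac12\vA'-\tfrac12\vA')B^2=0$. The boundary term vanishes because $\vA(0)=0$ and the functions decay at infinity. This antisymmetry of $\vA\partial_r+\tfrac12\vA'$ is the key cancellation.

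\textbf{The second term.} Compute $\partial_tB$ using the equations for $\phi_{22}$ and $\phi_{12}$:
\[
\partial_tB=\left(\partial_r+\tfrac{S+1}{r}\right)\left[\left(\partial_r-\tfrac Sr\right)\phi_{11}-m\phi_{21}-W_{21}\right]-m\left[-\left(\partial_r+\tfrac{S+1}{r}\right)\phi_{21}+m\phi_{11}-W_{11}\right].
\]
The two $m\,(\partial_r+\tfrac{S+1}{r})\phi_{21}$ terms cancel. What remains is
\[
\left(\partial_r+\tfrac{S+1}r\right)\left(\partial_r-\tfrac Sr\right)\phi_{11}-m^2\phi_{11}-\left(\partial_r+\tfrac{S+1}{r}\right)W_{21}+mW_{11}.
\]
The nonlinear part, paired with $A$, is the second line of the statement. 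The term $-m^2\int A\phi_{11}=-m^2\int(\tfrac12\vA(\phi_{11}^2)'+\tfrac12\vA'\phi_{11}^2)$ vanishes after integrating by parts, again using $\vA(0)=0$. The second-order operator term is exactly Claim~\ref{Cl1} (first identity) with $K=S$ and $f=\phi_{11}$, which gives the quadratic lines.

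\textbf{The other three functionals.} For $\wK_1$, note that $(\partial_r-\tfrac Sr)\phi_{11}-m\phi_{21}=\partial_t\phi_{22}+W_{21}$. The same two-step argument produces the operator $(\partial_r-\tfrac Sr)(\partial_r+\tfrac{S+1}r)$, and the second identity of Claim~\ref{Cl1} with $K=S$ gives the $(S+1)^2$ coefficient. For $\K_2$ and $\wK_2$, the bracket equals $-\partial_t\phi_{12}-W_{11}$ and $-\partial_t\phi_{21}+W_{22}$ respectively. This explains the overall sign flips in the quadratic terms and the signs in front of the $W$ terms.

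\textbf{Main obstacle.} The hardest part is justifying the boundary terms at $r=0$. The factors $\tfrac{S+1}{r}$ and $\tfrac Sr$ are singular there, and the integration by parts inside Claim~\ref{Cl1} produces terms like $\vA'f^2/r$ and $\vA''f^2$. The extra hypothesis $\vA'(0)=0$ (so $\vA=O(r^2)$, in practice $O(r^3)$), together with the vanishing of $\phi_j$ at the origin for $S\neq0,-1$, should remove these terms. I would first verify the identities for smooth, compactly supported (away from $0$) radial data and then pass to the limit by density.
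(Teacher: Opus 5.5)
Your proposal is correct and follows the paper's proof essentially step for step. You differentiate each functional, substitute \eqref{eq:SD2}, remove the self-pairing and $m^2$ terms with the antisymmetry identity \eqref{eq:int0}, and apply Claim~\ref{Cl1} with $K=S$ (first identity for $\K_1,\K_2$, second for $\wK_1,\wK_2$), with the sign changes for $\K_2,\wK_2$ arising as you describe; your discussion of the boundary terms at $r=0$ goes beyond the paper, which only invokes $\vA(0)=0$ and $\vA'(0)=0$ there.
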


The proofs of these results follow the same approach. We refer the reader to Appendix \ref{app:Proof_virial} for complete proof of above proposition. We now turn to the complete functional, which enables us to obtain a description of the full spinor \eqref{eq:psi1}. For simplicity, we denote
\be\label{sum}
\K:= \K_1+ \wK_{1} - \K_2- \wK_{2}.
\ee
where the functional are defined in \eqref{eq:K1}, \eqref{eq:tK1}, \eqref{eq:def_K2} and \eqref{eq:def_tK2}.

\begin{rem}[On the sum \eqref{sum}]
We choose \eqref{sum} in this way because it is the only combination that preserves the good sign property of the quadratic terms. This structural feature is crucial to obtain coercive virial-type estimates and to control the leading-order terms. Any other linear combination of the corresponding virial quantities fails to maintain this positivity, which is a key step in the argument.

Although the nonlinear terms play an important role in the full dynamics, they are not dominant in the regime considered here. In this setting, the quadratic (or linearized) part determines the leading-order behavior, ensuring the persistence of the good sign structure. For large solutions, however, the nonlinear coupling could outweigh these favorable contributions, potentially destroying the coercivity obtained for small solutions.
\end{rem}

\begin{cor}\label{corobueno2d}
\new{Under the assumptions of Proposition \ref{prop:K1}}, one has
\begin{equation}\label{derK}
\begin{aligned}
\dt\K
=&~{}
 - \int    \left( \vA' - \frac{\vA}{r} \right)|\nabla \phi|^2
\\&
  -\frac14 \int    \bigg( 
				 4 \frac{(S+1)^2}{r^3} \vA
				 -  \frac{1}{r^2}  \vA ' 
				+ \frac{1}{r}  \vA ''  
				-    \vA '''  
			 \bigg)   (\phi_{21}^2+\phi_{22}  ^2)
 \\&
 - \frac14 \int \bigg( 
 			4 \frac{S^2}{r^3} \vA
			 - \frac{1}{r^2} \vA ' 
 			+  \frac{1}{r}	 \vA''	
			-    \vA'''  
		\bigg) (\phi_{11}^2+ \phi_{12} ^2)
\\&
+m N_1+ N_2+N_3+N_4		
			,
\end{aligned}
\end{equation}
where $N_1$, $N_2$, $N_3$, $N_4$  are the terms related to the nonlinear part, given by
\begin{equation}\label{eq:k1}
\begin{aligned}
N_1=&~{} \int \vA'  \left[  \phi_{11}   W_{11}+\phi_{12}  W_{12}-\phi_{21}  W_{21}-\phi_{22}  W_{22}  \right] 
\\& 
+2 \int  \vA \left[ W_{11}  \partial_r \phi_{11}  +W_{12}  \partial_r \phi_{12}-W_{21}  \partial_r \phi_{21} - W_{22}  \partial_r \phi_{22}\right],
\end{aligned}
\end{equation}

\begin{equation}\label{eq:k2}
\begin{aligned}
& N_2 
= -(2S+1)\int \frac{\vA}{r} \bigg[  
			  \partial_r \phi_{11}  W_{21}
			+  \partial_r \phi_{21} W_{11} 
			+ \partial_r \phi_{12} W_{22}
			+ \partial_r \phi_{22} W_{12} 
	\bigg]
\\&
\quad  -\int \partial_r \left(\frac{\vA}{r}\right) \bigg[  
			S  W_{21} \phi_{11}  
			+ (S+1)  W_{11}  \phi_{21}
			+S  W_{22}  \phi_{12}
			+	(S+1) W_{12}   \phi_{22} 
				\bigg],
\end{aligned}
\end{equation}
\smallskip

\begin{equation}\label{eq:k3}
\begin{aligned}
N_3 =&
~{} \frac12 \int \left(\vA ''  -\frac{\vA '}{r} \right)  \bigg[
					 W_{21} \phi_{11}
					 -  W_{11} \phi_{21} 
					+ W_{22} \phi_{12} 
					- W_{12} \phi_{22} 
			\bigg],
\end{aligned}
\end{equation}
and 
\begin{equation}\label{eq:k4}
\begin{aligned}
N_4=& -2\int \vA \bigg[  
			\partial_r  W_{21} \partial_r  \phi_{11} 
			- \partial_r  W_{11}   \partial_r \phi_{21} 
			+\partial_r W_{22} \partial_r \phi_{12} 	
			- \partial_r W_{12}  \partial_r \phi_{22} 
	\bigg],
\end{aligned}
\end{equation}
are the terms related to the nonlinear part. 
\end{cor}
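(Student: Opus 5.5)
Corollary \ref{corobueno2d} follows by adding the four identities of Proposition \ref{prop:K1} with the signs prescribed in \eqref{sum}, namely $\dt\K=\dt\K_1+\dt\wK_1-\dt\K_2-\dt\wK_2$. I will treat the quadratic and the nonlinear contributions separately.

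\emph{Quadratic terms.} Because $\K_2$ and $\wK_2$ enter with a minus sign, and their quadratic parts in Proposition \ref{prop:K1} carry the opposite sign to those of $\K_1,\wK_1$, the four gradient terms combine into
\[
-\tfrac12\int\Big(2\vA'-\tfrac{2}{r}\vA\Big)\big[(\partial_r\phi_{11})^2+(\partial_r\phi_{12})^2+(\partial_r\phi_{21})^2+(\partial_r\phi_{22})^2\big].
\]
By \eqref{mod2} this equals $-\int(\vA'-\vA/r)|\nabla\phi|^2$. The zeroth-order terms pair up the same way: $\phi_{11}^2$ and $\phi_{12}^2$ share the coefficient with $S^2$, while $\phi_{21}^2$ and $\phi_{22}^2$ share the one with $(S+1)^2$. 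Multiplying the factor $-\tfrac12$ through the brackets gives the factor $-\tfrac14$ with coefficients $4K^2\vA/r^3-\vA'/r^2+\vA''/r-\vA'''$, as stated.

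\emph{Nonlinear terms.} This is the bookkeeping step and the main obstacle. There are eight nonlinear integrals, two from each functional, and each has the form $\int[\vA\partial_r W+\tfrac12\vA'W]\,L\phi$ or $\int[\vA\partial_r\phi+\tfrac12\vA'\phi]\,(L W+mW)$, with $L=\partial_r+\frac{S+1}{r}$ or $\partial_r-\frac{S}{r}$. I will expand every product and sort the resulting terms into four groups.

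The terms proportional to $m$ are integrated by parts using $\int\vA\,\partial_r W\,\phi=-\int\vA'W\phi-\int\vA W\partial_r\phi$, with no boundary terms since $\vA(0)=0$. This produces the combination $\int\vA'[\cdots]+2\int\vA[W\partial_r\phi]$ of $N_1$; the cross terms of types $\phi_{12}W_{12}$ and $\phi_{11}W_{11}$ pair up with the signs dictated by \eqref{eq:SD2}.

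The products of $\partial_r W$ with $\partial_r\phi$ carrying the full weight $\vA$ are kept as they are and give $N_4$.

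The terms with $\frac{1}{r}$ factors produce the coefficients $(S+1)$ and $-S$. In the expression $\vA\,\partial_r W\,\tfrac{K}{r}\phi$ I integrate by parts so that the derivative moves onto $\phi$, which creates $\partial_r(\vA/r)$. The two contributions combine as $(S+1)+S=2S+1$ in front of $\frac{\vA}{r}\,W\,\partial_r\phi$, giving $N_2$.

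The remaining terms involve $\tfrac12\vA'$ times $\partial_r$ of either factor. Integrating these by parts converts them to $-\tfrac12\vA''W\phi$, while the pieces carrying $\vA'/r$ remain, which gives $N_3$.

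Finally, I will check each sign against the pairings $(W_{21},\phi_{11})$, $(W_{11},\phi_{21})$, $(W_{22},\phi_{12})$ and $(W_{12},\phi_{22})$. Any mismatch would indicate that some terms should cancel rather than add, and the hypothesis $\vA'(0)=0$ ensures that the boundary terms coming from the $\vA'$ integrations by parts vanish.
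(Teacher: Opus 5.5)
Your proposal is correct and is essentially the paper's proof. You add the four identities of Proposition \ref{prop:K1} with the signs of \eqref{sum}, read off the quadratic part directly, integrate by parts the $m$-terms, the $\frac{1}{r}$-terms and the $\tfrac12\vA'$-terms to obtain $N_1$, $N_2$, $N_3$, and keep the $\vA\,\partial_r W\,\partial_r\phi$ products as $N_4$. Two points left implicit, both of which check out, are that the $\tfrac12\vA'$-weighted $m$-terms cancel exactly in the sum (so only the $\vA$-weighted ones feed $N_1$), and that $\vA'(0)=0$ is also what removes the boundary term $\frac{\vA}{r}W\phi$ at $r=0$ in the $N_2$ integration by parts.
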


Notice that the term $N_2$ contains the factor $(2S+1)$ already discovered by Borrelli and Frank \cite{BF_decay}. In the case where $2S+1 =0$, $N_2$ has better decay properties.  

\begin{proof}[Proof of Corollary \ref{corobueno2d}]
Recalling Proposition \ref{prop:K1}, and  after regrouping terms, we obtain
\begin{equation}\label{nueva_J}
\begin{aligned}
\dt \K
=&~{}
 -\frac12  \int    \left( 2\vA' -\vA \frac{2}{r} \right)|\nabla \phi|^2
\\&
  -\frac12 \int    \bigg( 
				 2 \frac{(S+1)^2}{r^3} \vA
				 -  \frac{1}{2r^2}  \vA ' 
				 + \frac{1}{2r}  \vA ''  
				-  \frac12  \vA '''  
			 \bigg)   (\phi_{21}^2+\phi_{22}  ^2)
 \\&
 - \frac12 \int \bigg( 
 			2 \frac{S^2}{r^3} \vA
			 - \frac{1}{2r^2} \vA ' 
 			+  \frac{1}{2r}	 \vA''	
			- \frac12   \vA'''  
		\bigg) (\phi_{11}^2+ \phi_{12} ^2)
\\&+m N_1\new{+N_2+N_3+N_4},
\end{aligned}
\end{equation}
where
\[
\begin{aligned}
m N_1=&
-m\int \vA  \left[  \phi_{11}  \partial_r W_{11}+\phi_{12}  \partial_r W_{12}-\phi_{21}  \partial_r W_{21}-\phi_{22}  \partial_r W_{22}  \right] 
\\& 
+m \int  \vA \left[ W_{11}  \partial_r \phi_{11}  +W_{12}  \partial_r \phi_{12}-W_{21}  \partial_r \phi_{21} - W_{22}  \partial_r \phi_{22}\right] ,
\end{aligned}
\]

\[\begin{aligned}
N_2=&
-\int \frac{\vA}{r} \bigg[  
			-S \partial_r  W_{21} \phi_{11}  
			+ (S+1) \partial_r \phi_{11}  W_{21}
			- (S+1) \partial_r  W_{11}  \phi_{21}
			+S  \partial_r \phi_{21} W_{11} 
	\\&\qquad\quad
			-S \partial_r W_{22}  \phi_{12}
			+ (S+1) \partial_r \phi_{12} W_{22}
				-(S+1) \partial_r W_{12}   \phi_{22} 
				+ S \partial_r \phi_{22} W_{12} 
	\bigg],
\end{aligned}
\]

\[
\begin{aligned}
N_3=&
- \frac12 \int \vA '  \bigg[
					 W_{21} \left( \partial_r -\frac{S}{r}\right) \left(  \phi_{11}  \right)
					- W_{11}  \left( \partial_r +\frac{S+1}{r}\right) \left(  \phi_{21}\right)
		\\&\qquad\qquad\qquad
					+ W_{22}  \left( \partial_r -\frac{S}{r}\right) \left(  \phi_{12}\right)
					- W_{12}  \left( \partial_r +\frac{S+1}{r}\right) \left(  \phi_{22} \right)
			\bigg]
\\&
+\frac12 \int  \vA '  \bigg[  
					   \phi_{21} \left( \partial_r -\frac{S}{r}\right)\left( W_{11} \right)
					+ \phi_{22}  \left( \partial_r -\frac{S}{r}\right) \left( W_{12} \right)
			\\&\qquad\qquad\qquad
					- \phi_{11} \left( \partial_r +\frac{S+1}{r}\right)  \left(W_{21}\right)
					-  \phi_{12}   \left( \partial_r +\frac{S+1}{r}\right) \left(W_{22}\right)
			\bigg],
\end{aligned}
\]
and 
\begin{equation*}
\begin{aligned}
N_4=& -2\int \vA \bigg[  
			\partial_r  W_{21} \partial_r  \phi_{11} 
			- \partial_r  W_{11}   \partial_r \phi_{21} 
			+\partial_r W_{22} \partial_r \phi_{12} 	
			- \partial_r W_{12}  \partial_r \phi_{22} 
	\bigg].
\end{aligned}
\end{equation*}

Let us focus on $N_1$. Notice that the terms where the mass $m$ is involved, after some integration by parts, satisfy
\begin{equation*}
\begin{aligned}
mN_1=& -m\int \vA  \left[  \phi_{11}  \partial_r W_{11}+\phi_{12}  \partial_r W_{12}-\phi_{21}  \partial_r W_{21}-\phi_{22}  \partial_r W_{22}  \right] 
\\& 
+m \int  \vA \left[ W_{11}  \partial_r \phi_{11}  +W_{12}  \partial_r \phi_{12}-W_{21}  \partial_r \phi_{21} - W_{22}  \partial_r \phi_{22}\right] 
\\
=&~{} m\int \vA'  \left[  \phi_{11}   W_{11}+\phi_{12}  W_{12}-\phi_{21}  W_{21}-\phi_{22}  W_{22}  \right] 
\\& 
+2m \int  \vA \left[ W_{11}  \partial_r \phi_{11}  +W_{12}  \partial_r \phi_{12}-W_{21}  \partial_r \phi_{21} - W_{22}  \partial_r \phi_{22}\right] .
\end{aligned}
\end{equation*}
Now, let us focus on one of the terms that involves the \new{vorticity} $S$, rearranging the terms,
\[
\begin{aligned}
N_2=&
-\int \frac{\vA}{r} \bigg[  
			-S \partial_r  W_{21} \phi_{11}  
			+ (S+1) \partial_r \phi_{11}  W_{21}
			- (S+1) \partial_r  W_{11}  \phi_{21}
			+S  \partial_r \phi_{21} W_{11} 
	\\&\qquad\qquad
			-S \partial_r W_{22}  \phi_{12}
			+ (S+1) \partial_r \phi_{12} W_{22}
				-(S+1) \partial_r W_{12}   \phi_{22} 
				+ S \partial_r \phi_{22} W_{12} 
	\bigg],
\end{aligned}
\]	
and integrating by parts and using that \new{$\varphi(0)=\varphi'(0)=0$ (which also gives $\displaystyle \lim_{r\to 0}\varphi(r)/r=0$)}, we have
\begin{equation*}
\begin{aligned}
N_2
=&  -(2S+1)\int \frac{\vA}{r} \bigg[  
			  \partial_r \phi_{11}  W_{21}
			+  \partial_r \phi_{21} W_{11} 
			+ \partial_r \phi_{12} W_{22}
			+ \partial_r \phi_{22} W_{12} 
	\bigg]
\\&
 -\int \partial_r \left(\frac{\vA}{r}\right) \bigg[  
			S  W_{21} \phi_{11}  
			+ (S+1)  W_{11}  \phi_{21}
			+S  W_{22}  \phi_{12}
			+	(S+1) W_{12}   \phi_{22} 
	\bigg].
\end{aligned}
\end{equation*}
For the last two terms, rearranging the terms, we have
\[
\begin{aligned}
N_3
=&
- \frac12 \int \vA '  \bigg[
					 W_{21} \left( \partial_r -\frac{S}{r}\right) \left(  \phi_{11}  \right)
					 + \phi_{11} \left( \partial_r +\frac{S+1}{r}\right) W_{21}
		\\&\qquad\qquad	
					- W_{11}  \left( \partial_r +\frac{S+1}{r}\right) \left(  \phi_{21}\right)
					 - \phi_{21} \left( \partial_r -\frac{S}{r}\right)W_{11} 
		\\&\qquad\qquad
					+ W_{22}  \left( \partial_r -\frac{S}{r}\right) \left(  \phi_{12}\right)
					+ \phi_{12}   \left( \partial_r +\frac{S+1}{r}\right) W_{22}
		\\&\qquad\qquad	
					- W_{12}  \left( \partial_r +\frac{S+1}{r}\right) \left(  \phi_{22} \right)
					- \phi_{22}  \left( \partial_r -\frac{S}{r}\right)W_{12} 
			\bigg]
\\
=&
- \frac12 \int \vA '  \bigg[
					\partial_r ( W_{21} \phi_{11})
					 + \frac{1}{r} W_{21} \phi_{11}
					- \frac{1}{r} W_{11} \phi_{21} 
					 -  \partial_r \left( W_{11} \phi_{21} \right)
		\\&\qquad\qquad
					+\frac{1}{r} W_{22}   \phi_{12}
					+  \partial_r  \left(W_{22} \phi_{12} \right)
					-\frac{1}{r} \left( W_{12}  \phi_{22} \right)
					-   \partial_r  \left( W_{12} \phi_{22} \right)
			\bigg].
\end{aligned}
\]
Then, we obtain that 
\begin{equation*}
\begin{aligned}
N_3
=&
~{} \frac12 \int \left(\vA ''  -\frac{\vA '}{r} \right)  \bigg[
					 W_{21} \phi_{11}
					 -  W_{11} \phi_{21} 
					+ W_{22} \phi_{12} 
					- W_{12} \phi_{22} 
			\bigg].
\end{aligned}
\end{equation*}
Finally, the proof of this proposition ends by collecting the previous identities.
\end{proof}

\section{High-order nonlinearities: bounds on the \new{quadratic and} nonlinear terms}\label{sec:thm+}

Let us start the proof of Theorem \ref{thm:2D_m+}. In this section, we consider the massless case ($m=0$), which implies that $N_1$ in \eqref{derK} is not considered. Our analysis, therefore, focuses on the remaining terms $N_2$, $N_3$, and $N_4$.

\subsection{\new{Nonlinear term in the} massless case} 

Let us focus on the terms $N_2$, $N_3$, and $N_4$ in the Corollary \ref{corobueno2d} (see \eqref{eq:k2},\eqref{eq:k3} and \eqref{eq:k4}, respectively). Using \eqref{eq:W_2D} in \eqref{deco_complex}, and \eqref{mod1}-\eqref{mod2},
\[
\left|\partial_r \phi_{11}  W_{21}
			+  \partial_r \phi_{21} W_{11} 
			+ \partial_r \phi_{12} W_{22}
			+ \partial_r \phi_{22} W_{12} \right| \lesssim  | \nabla \phi| |\phi|^p.
\]
Similarly,
\[
\left| S  W_{21} \phi_{11}  
			+ (S+1)  W_{11}  \phi_{21}
			+S  W_{22}  \phi_{12}
			+	(S+1) W_{12}   \phi_{22}  \right| 
            \new{\lesssim} |\phi|^{p+1}.
\]
\new{Here and below all implicit constants in $\lesssim$ are allowed to depend on the fixed vorticity $S\neq0,-1$.} 

Firstly, for $N_2$, using \new{the Cauchy--Schwarz inequality} along with the previous estimates we have
\[
\begin{aligned}
|N_2|
=&~{}  |2S+1|\left|\int \frac{\vA}{r} \bigg[  
			  \partial_r \phi_{11}  W_{21}
			+  \partial_r \phi_{21} W_{11} 
			+ \partial_r \phi_{12} W_{22}
			+ \partial_r \phi_{22} W_{12} 
	\bigg]\right|
\\&
 +\left|\int \partial_r \left(\frac{\vA}{r}\right) \bigg[  
			S  W_{21} \phi_{11}  
			+ (S+1)  W_{11}  \phi_{21}
			+S  W_{22}  \phi_{12}
			+	(S+1) W_{12}   \phi_{22} 
	\bigg]\right|
	\\
\lesssim &~{}  |2S+1| \int \frac{\vA}{r} | \nabla \phi| |\phi|^p  
  + \int \left| \partial_r \left(\frac{\vA}{r}\right)\right| |\phi|^{p+1}
  	\\
\lesssim &~{}\int \frac{\vA}{r} ( | \nabla \phi|^2|\phi|^2+ |\phi|^{2+2(p-2)} ) 
  + \int \left| \partial_r \left(\frac{\vA}{r}\right)\right| |\phi|^{p+1}.
 \end{aligned}
\]

\new{As stated above, the implicit constants below may depend on the fixed $S\neq0,-1$; we do not track this dependence explicitly.} By a direct application of Lemma \ref{lem2d}, we have
\[
\begin{aligned}
|N_2|\lesssim &~{}  |2S+1| \int \frac{\vA}{r} \left(\frac{\epsilon^2 | \nabla \phi|^2 }{(1+r)}+\frac{\epsilon^{2(p-2)} |\phi|^2}{(1+r)^{p-2}} \right)
\\&~\qquad
  +(\new{|S|+1})\int \frac{\epsilon^{p-1}}{(1+r)^{(p-1)/2}}\left| \partial_r \left(\frac{\vA}{r}\right)\right| |\phi|^2.
\end{aligned}
\]

For the second term, we have 
\begin{equation}\label{N3}
\begin{aligned}
|N_3| =&
~{} \frac12 \left| \int \left(\vA ''  -\frac{\vA '}{r} \right)  \bigg[
					 W_{21} \phi_{11}
					 -  W_{11} \phi_{21} 
					+ W_{22} \phi_{12} 
					- W_{12} \phi_{22} 
			\bigg]\right|
\\
\leq&
~{} \frac12 \int \left|\vA ''  -\frac{\vA '}{r} \right|  |\phi|^{p+1}
\leq ~{} C \epsilon^{p-1}  \int \left|\vA ''  -\frac{\vA '}{r} \right| \frac{ |\phi|^2}{(1+r)^{(p-1)/2}} .
\end{aligned}
\end{equation}
Finally, the last term is bounded as follows:
\begin{equation}\label{N4}
\begin{aligned}
|N_4| =&~{} 2\left|\int \vA \bigg[  
			\partial_r  W_{21} \partial_r  \phi_{11} 
			- \partial_r  W_{11}   \partial_r \phi_{21} 
			+\partial_r W_{22} \partial_r \phi_{12} 	
			- \partial_r W_{12}  \partial_r \phi_{22} 
	\bigg]\right|
	\\
	\leq&~{} C \int \vA |\nabla \phi|^2 |\phi|^{p-1}
	\leq C\epsilon^{p-1} \int \frac{\vA  }{(1+r)^{(p-1)/2}} |\nabla \phi|^2.
\end{aligned}
\end{equation}
Then, we conclude
\[
\begin{aligned}
|N_2+N_3+N_4|
\leq&~{}
|2S+1| \int \frac{\vA}{r} \frac{\epsilon^2  | \nabla \phi|^2}{(1+r)}
+C\epsilon^{p-1} \int \frac{\vA |\nabla \phi|^2 }{(1+r)^{(p-1)/2}} 
\\&
+|2S+1| \int \frac{\vA}{r}\frac{\epsilon^{2(p-2)}}{(1+r)^{p-2}}  |\phi|^2
\\&
  +(\new{|S|+1})\int\frac{\epsilon^{p-1}}{(1+r)^{(p-1)/2}}\left| \partial_r \left(\frac{\vA}{r}\right)\right| |\phi|^2
\\&+C \epsilon^{p-1}  \int \frac{1}{(1+r)^{(p-1)/2}}\left|\vA ''  -\frac{\vA '}{r} \right|  |\phi|^2
.
\end{aligned}
\]

Finally,  since $p\geq 5$ (see \ref{thm:+_massless_Strong} in Theorem \ref{thm:2D_m+}), we conclude that
\[
\begin{aligned}
|N_2+N_3+N_4|
\lesssim&~{}
\epsilon^2  \int \frac{\vA}{r(1+r)} | \nabla \phi|^2
+\epsilon^4 \int \frac{\vA}{r (1+r)^{3}} |\phi|^2
\\&
+\epsilon^4 \int \left[\frac{1}{(1+r)^{2}}\left| \frac{\vA'}{r}-\frac{\vA}{r^2} \right| 
+ \frac{1}{(1+r)^{2}}\left|\vA ''  -\frac{\vA '}{r} \right|  \right]|\phi|^2
.
\end{aligned}
\]
Now we choose the particular weight function. This choice is motivated by the strength of the quadratic terms, the order of the nonlinearities, and the numerous nonlinear terms. 

\begin{lem}\label{Le1}
Let us consider 
\begin{equation}\label{eq:vA_HN}
\vA  = \frac{r^{3}}{(1+r)^2}. 
\end{equation}
Then $\vA $ is a smooth bounded function such that $\vA (0)=\vA' (0)=0$, and 
\begin{equation}\label{eq:va_cub}
\begin{aligned}
 \vA '=~{} \frac{r^2(r+3)}{(1+r)^3} &,~{} \new{\vA''= \frac{6 r}{(1 + r)^4}, ~{}\vA'''=\frac{6 - 18 r}{(1 + r)^5} }\\
 \vA '- \frac{\vA  }{r}  = \frac{2r^2}{(1+r)^3}, & \quad   \frac{\vA  }{r(1+r)}  =  \frac{r^2}{(1+r)^3},
\\
\vA ''- \frac{\vA'  }{r}  =&-\frac{r^3+4r^2-3r}{(r+1)^4}
.
 \end{aligned}
 \end{equation}
\end{lem}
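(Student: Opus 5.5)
The plan is a direct computation. I would first write $\vA$ as the quotient $r^3(1+r)^{-2}$ and differentiate repeatedly with the quotient rule, each time reducing to a single power of $(1+r)$ in the denominator. The first derivative is
\[
\vA'=\frac{3r^2(1+r)-2r^3}{(1+r)^3}=\frac{r^2(r+3)}{(1+r)^3}.
\]
Differentiating $(r^3+3r^2)(1+r)^{-3}$ gives
\[
\vA''=\frac{(3r^2+6r)(1+r)-3(r^3+3r^2)}{(1+r)^4}=\frac{6r}{(1+r)^4},
\]
since the cubic and quadratic terms cancel. One more differentiation gives
\[
\vA'''=\frac{6(1+r)-24r}{(1+r)^5}=\frac{6-18r}{(1+r)^5}.
\]
Smoothness on $[0,\infty)$ is clear because $1+r>0$ there. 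Since $\vA=O(r^3)$ and $\vA'=O(r^2)$ near the origin, we get $\vA(0)=\vA'(0)=0$, and also $\vA/r\to0$ as $r\to0$, which is the property used for the boundary terms in the proof of Corollary \ref{corobueno2d}.

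Next I would compute the combinations over a common denominator:
\[
\vA'-\frac{\vA}{r}=\frac{r^2\big[(r+3)-(1+r)\big]}{(1+r)^3}=\frac{2r^2}{(1+r)^3},
\qquad
\frac{\vA}{r(1+r)}=\frac{r^2}{(1+r)^3}.
\]
Similarly,
\[
\vA''-\frac{\vA'}{r}=\frac{6r-r(r+3)(1+r)}{(1+r)^4}=-\frac{r^3+4r^2-3r}{(1+r)^4},
\]
using $(r+3)(1+r)=r^2+4r+3$.

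The only genuinely delicate point is the word ``bounded''. Since $\vA\sim r$ as $r\to\infty$, $\vA$ itself is not bounded. What is true, and what the virial computations actually use, is that $\vA'$, $\vA/r$, $\vA''$ and $\vA'''$ are all bounded on $(0,\infty)$, while $\vA$ has at most linear growth. I would therefore state this precise version and check that it suffices for Claim \ref{Cl1} and Proposition \ref{prop:K1}. In those integrations by parts, the boundary terms at infinity vanish for $H^1$ (or $E(\delta)$) solutions, because every term involving $\vA$ carries a factor $1/r$ or is paired with $\partial_r$ of a quadratic expression in $\phi$ that decays. I expect no further obstacle: the rest of the statement is pure algebra.
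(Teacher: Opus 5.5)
Your computation is correct and is exactly the direct verification the paper intends; the paper's own proof is just the remark that it is direct, and all six formulas check out. Your caveat is also right: $\vA(r)=r^3/(1+r)^2\sim r$ as $r\to\infty$, so $\vA$ is not bounded, and the lemma should be read as you propose (linear growth of $\vA$, with $\vA/r$, $\vA'$, $\vA''$, $\vA'''$ bounded), which matches how the paper actually uses the weight, e.g.\ the bound $|\vA|+|\vA'|+\frac1r(|\vA|+|\vA'|)\lesssim r$ in Subsection~\ref{3p4}.
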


The proof of this result is direct. Therefore, using Lemma \ref{Le1}, we obtain that 
\begin{equation}\label{eq:N2+N3+N4}\color{blue}
\begin{aligned}
|N_2+N_3+N_4|
\lesssim&~{}
\epsilon^2  \int \frac{r^2}{(1+r)^3} | \nabla \phi|^2
\\&
+\epsilon^4 \int \left[\frac{r^2}{ (1+r)^{5}}  + \frac{2r}{(1+r)^{6}} 
+ \frac{r^3+4r^2+3r}{(1+r)^{6}} \right]|\phi|^2
\\
\lesssim&~{}
\epsilon^2  \int \frac{r^2}{(1+r)^3} | \nabla \phi|^2
+\epsilon^4 \int  \frac{1}{(1+r)^{3}} |\phi|^2
.
\end{aligned}
\end{equation}

\subsection{Quadratic terms \new{in the massive case}}
Now, let us focus on the weight functions for $|\phi_1|^2$ and $|\phi_2|^2$. For $K\neq0$, using \eqref{eq:vA_HN}, we get
 \begin{equation}\label{quadratic_part}
\begin{aligned}
 4 K^2 \frac{\vA }{r^3}
 			 - \frac{1}{r^2}\vA ' 
			 &
 			 + \frac{1}{r}\vA'' 
			-    \vA'''   
			\\
			=&~{}
			\frac{1}{(1+r)^5}\big( 4 K^2 (r+1)^3- r^3-5r^2+17r-3 \big)
\\
			=&~{}
			\frac{1}{(1+r)^5}\big( (4 K^2-1) (r+1)^3-2(r+1)^2+24r \big).
\end{aligned}
\end{equation}
We remark that 
\new{the above quantity is positive for every $K\in\mathbb{Z}\setminus\{0\}$; in particular it is positive in the two cases $K=S$ and $K=S+1$ relevant here, since $S\neq0,-1$.} 
Finally,  recalling \eqref{derK} and using the above identities and \eqref{eq:N2+N3+N4}, we obtain that 
 \begin{equation*}
\begin{aligned}
\dt \K \leq&~{}
 -(1-C\epsilon^2) \int    \frac{r^2}{(1+r)^3}|\nabla \phi|^2
\\&
  -\frac12 \int    
  \frac{1}{2(1+r)^5} M_1(S+1,r,\epsilon)
			    (\phi_{21}^2+\phi_{22}  ^2)
 \\&
 - \frac12 \int 
 \frac{1}{2(1+r)^5}M_1(S,r,\epsilon)
		 (\phi_{11}^2+ \phi_{12} ^2),
\end{aligned}
\end{equation*}
with
\[
\ba
M_1(S,r,\epsilon):= &~{} (4 S^2-1) (r+1)^3-2(r+1)^2+24r
				-4C\epsilon^4 (r+1)^2.
\ea
\]

Then, we conclude that
 \begin{equation*}
\begin{aligned}
-\dt \K
\gtrsim&~{}
 \int    \frac{r^2}{(1+r)^3}|\nabla \phi|^2
  +\new{ \int \frac{r^3+r^2+r+1}{(1+r)^5} |\phi|^2}. 
\end{aligned}
\end{equation*}
\new{(Here we use that $\varepsilon$ is small, so that $1-C\varepsilon^2>0$ and $M_1(S,r,\varepsilon),\,M_1(S+1,r,\varepsilon)\geq C (1+r)^3>0$ for some $C>0$, which yields the coercive lower bound.)}

\subsection{\new{Nonlinear terms in the } massive case}\label{sub3p3}

Now, let us deal with the term appearing in Corollary \ref{corobueno2d}, which involves the mass term, i.e., the quantity $N_1$ obtained in \eqref{eq:k1}. Using \eqref{eq:W_2D} and Cauchy-Schwarz inequality, then $N_1$ is bounded by
\be\label{cota_N1}
\begin{aligned}
|N_1|
\leq&~{}
\bigg|\int \vA'  \left[  \phi_{11}   W_{11}+\phi_{12}  W_{12}-\phi_{21}  W_{21}-\phi_{22}  W_{22}  \right] \bigg|
\\& 
+2\bigg| \int  \vA \left[ W_{11}  \partial_r \phi_{11}  +W_{12}  \partial_r \phi_{12}-W_{21}  \partial_r \phi_{21} - W_{22}  \partial_r \phi_{22}\right] \bigg|
\\
\lesssim&~{}
\int |\vA'| |\phi|^{p+1}
+\int  \vA |\nabla \phi| |\phi|^{p}
\\
\lesssim&~{}
\int |\vA'| |\phi|^{p+1}
+\int  \vA (|\nabla \phi|^2|\phi|^4+ |\phi|^{2+2(p-3)}).
\end{aligned}
\ee
Applying Lemma \ref{lem2d}, we obtain
\[
\begin{aligned}
|N_1|
\lesssim&~{}
C \epsilon^4\int  \frac{\vA |\nabla \phi|^2}{(1+r)^2}
+C\epsilon^{p-1}\int \frac{|\vA'| |\phi|^{2} }{(1+r)^{(p-1)/2}} 
+C\epsilon^{2(p-3)}\int  \frac{\vA  |\phi|^{2}}{(1+r)^{p-3}} 
.
\end{aligned}
\]
By recalling the case \ref{thm:+_massive} and the form of $\vA$ from \eqref{eq:vA_HN}, and then using the fact that $p\geq 7$, we obtain
\[
\begin{aligned}
|N_1|
\leq&~{}
C \epsilon^4\int  \frac{\vA }{(1+r)^2}|\nabla \phi|^2
+C\epsilon^{6}\int \bigg( \frac{|\vA'|}{(1+r)^{3}}+ \frac{\vA}{(1+r)^{4}} \bigg) |\phi|^{2}
\\
\leq&~{}
C \epsilon^4\int  \frac{r^3}{(1+r)^4}|\nabla \phi|^2
+C\epsilon^{6}\int \bigg( \frac{r^2(r+3)}{(1+r)^{6}}+ \frac{r^3}{(1+r)^{6}} \bigg) |\phi|^{2}
\\
\leq&~{}
C \epsilon^4\int  \frac{r^2}{(1+r)^3}|\nabla \phi|^2
+C\epsilon^{6}\int \frac{r^3+r^2}{(1+r)^{6}}  |\phi|^{2}
.
\end{aligned}
\]
 \new{Similar to the massless case}, we conclude that
 \begin{equation}\label{final_K+}
\begin{aligned}
-\dt \K
\gtrsim &~{}
 \int    \frac{r^2}{(1+r)^3}|\nabla \phi|^2
  +\frac12 \int
  \frac{r^3+r^2+r+1}{(1+r)^5} |\phi|^2 \\
  \gtrsim &~{}
 \int    \frac{r^2}{(1+r)^3}|\nabla \phi|^2
  + \int
  \new{\frac{1}{(1+r)^{2}}} |\phi|^2.
\end{aligned}
\end{equation}

This estimate will be key to prove the Theorem \ref{thm:2D_m+}.

\begin{rem}\label{3p1}
It is interesting to compare the previous result (the bound on $N_1$ from \eqref{cota_N1}) to certain explicit regimes. In the Borrelli-Frank regime, the nonlinearity is $W=|\psi|^2\psi$, and
\[
W_{11}  \partial_r \phi_{11}  +W_{12}  \partial_r \phi_{12}-W_{21}  \partial_r \phi_{21} - W_{22}  \partial_r \phi_{22}= \frac12|\psi|^2 (|\psi_1|^2-|\psi_2|^2)_r.
\]
In Soler's case, 
$W=
\new{\langle\psi,\beta\psi\rangle} \beta \psi,$ and
\[
W_{11}  \partial_r \phi_{11}  +W_{12}  \partial_r \phi_{12}-W_{21}  \partial_r \phi_{21} - W_{22}  \partial_r \phi_{22}= \frac12\new{(|\psi_1|^2-|\psi_2|^2) (|\psi|^2)_r .}
\]

In the Honeycomb case, $W_1=(\new{\beta_1} |\psi_1|^2+\new{2\beta_2}|\psi_2|^2) \psi_1$, $W_2 =(\new{2\beta_2} |\psi_1|^2+\new{\beta_1}|\psi_2|^2) \psi_2$, and 
\[
\ba
&W_{11}  \partial_r \phi_{11}  +W_{12}  \partial_r \phi_{12}-W_{21}  \partial_r \phi_{21} - W_{22}  \partial_r \phi_{22} \\
&= \frac12 \new{\beta_1}\left(  |\phi_1|^2(|\phi_1|^2)_r-|\phi_2|^2(|\phi_2|^2)_r \right) +\frac12 \new{\beta_2} \left(  |\phi_2|^2(|\phi_1|^2)_r-|\phi_1|^2(|\phi_2|^2)_r \right) \\
&= \frac14 \new{\beta_1} ( |\phi_1|^4-  |\phi_2|^4)_r +\frac12 \new{\beta_2} \left(  |\phi_2|^2(|\phi_1|^2)_r-|\phi_1|^2(|\phi_2|^2)_r \right).
\ea
\]

\new{
Consequently, the ``bad behaviour'' of $N_1$ means that it cannot be 
written as a total derivative, or as a divergence, and therefore contributes 
a genuine non-conservative term to the virial identity. It is worth noting 
that for some specific nonlinearities, such as the pseudo-scalar model 
$W=\langle\psi,\beta\psi\rangle\psi$, the term $N_1$ can indeed be 
rewritten as a complete derivative. Nevertheless, for the sake of 
completeness, we have decided to maintain the general hypotheses. 
}

\new{
The 
hypotheses on the nonlinearity required in the main result are therefore 
\emph{natural}, in the sense that they are forced by the structure of 
any non-trivially coupled two-component power-type nonlinearity. We do 
not claim here whether these conditions are \emph{optimal}; the only 
regimes where a total derivative appears are the decoupled ones, such 
as $\beta_2=0$ in the Honeycomb case.
}

\end{rem}

\subsection{Bound on the virial functionals}\label{3p4}
Now we obtain precise bounds on the functional $\K_1$. A bound on $\K_2$ and the remaining functionals is \new{analogous}. 

From \eqref{eq:K1},
\[
\begin{aligned}
\left| \K_{1}\right| \lesssim &~{} \int  \vA |\partial_r \phi_{11}| |\partial_r  \phi_{22}| + \int  \frac1r\vA |\partial_r \phi_{11}|  |\phi_{22}| + \int  \vA |\partial_r \phi_{11}|  |\phi_{12}| \\
&~{} +\int |\vA '| |\phi_{11}| | \partial_r  \phi_{22}|  +\int  \frac1r |\vA '| |\phi_{11}| |   \phi_{22}|  +\int |\vA '| |\phi_{11}|  |\phi_{12}|.
\end{aligned}
\]
Recalling \eqref{eq:va_cub} and focusing on the interval $r\leq 1$, we have
\[
\begin{aligned}
|\vA'|+\frac{1}{r}|\vA'|+|\vA|+\frac{1}{r}|\vA|
=&~{}\frac{r(r+1)(r+3)}{(1+r)^3} +\frac{r(r^{2}+r)}{(1+r)^2}
\lesssim r
.
\end{aligned}
\]
Similarly, for $r>1$, one \new{observes}
\[
\begin{aligned}
|\vA'|+\frac{1}{r}|\vA'|+|\vA|+\frac{1}{r}|\vA|
\leq &~{}\frac{3r(1+r)^2}{(1+r)^3} +\frac{r(1+r)^2}{(1+r)^2}
\lesssim r
.
\end{aligned}
\]
Finally, recalling the notation in \eqref{mod1}-\eqref{mod2} and using Cauchy-Schwarz inequality, we conclude
\[
\begin{aligned}
\left| \K_{1}\right| \lesssim &~{} \int  ( |\nabla \phi|^2+ |\phi|^2)r dr.
\end{aligned}
\]
Using \new{the fact that $\sup_{t\geq 0} \|{\bf \phi}(t)\|_{H^1} <\varepsilon$ (recall that $\|\phi\|_{H^1}^2=\displaystyle \int(|\nabla\phi|^2+|\phi|^2)\,r\,dr$)}, we conclude the uniform bound on $\K_{1}$.

\section{Low-order nonlinearities: bounds on the \new{quadratic part and} nonlinear terms}\label{sec:thm-}

In this section, let us focus on terms that do not involve the mass, i.e., $N_2$, $N_3$, and $N_4$. These bounds will be useful for the proof of Theorem \ref{thm:2D_m-}.

\subsection{Massless case}

Let us focus on the terms $N_2$, $N_3$, and $N_4$ in the Corollary \ref{corobueno2d} (see \eqref{eq:k2},\eqref{eq:k3} and \eqref{eq:k4}, respectively).

Firstly, \new{we emphasize that the treatment of the term $N_2$ differs slightly in this setting. Although this is motivated by the space $E(\delta)$ and the procedure is analogous, we include it for the sake of clarity.} For $N_2$, using Cauchy-Schwarz along with \eqref{eq:W_2D}, we have
\[
\begin{aligned}
|N_2|
\leq&~{} |2S+1| \int \frac{\vA}{r} | \nabla \phi| |\phi|^p  
  +(\new{|S|+1})\int\left| \partial_r \left(\frac{\vA}{r}\right)\right| |\phi|^{p+1}
  	\\
\leq&~{}  |2S+1| \int \frac{\vA}{r} ( \epsilon| \nabla \phi|^2+ \epsilon^{-1}|\phi|^{2+2(p-1)} ) 
  +(\new{|S|+1})\int\left| \partial_r \left(\frac{\vA}{r}\right)\right| |\phi|^{p+1};
 \end{aligned}
\]
 and by a direct application of Lemma \ref{lem2d}, we obtain
\[
\begin{aligned}
|N_2| \leq&~{}  |2S+1| \int \frac{\vA}{r} \left( \epsilon | \nabla \phi|^2+ |\phi|^2\frac{\epsilon^{2p-3}}{(1+r)^{p-1}} \right) 
\\&
  +(\new{|S|+1})\int\frac{\epsilon^{p-1}}{(1+r)^{(p-1)/2}}\left| \partial_r \left(\frac{\vA}{r}\right)\right| |\phi|^2
      	\\
\lesssim&~{}  \epsilon \int \frac{\vA}{r} | \nabla \phi|^2
+  \epsilon^{2p-3} \int \frac{\vA}{r(1+r)^{p-1}} |\phi|^2
  +\epsilon^{p-1} \int \frac{1}{(1+r)^{(p-1)/2}}\left| \partial_r \left(\frac{\vA}{r}\right)\right| |\phi|^2
  .
\end{aligned}
\]

\new{Recalling the bounds in \eqref{N3} and \eqref{N4}, along with the} hypothesis \emph{\ref{thm:-_m-_w}},  we have  $p\geq 3$, and therefore
\[
\begin{aligned}
|N_2+N_3+N_4|
\lesssim&~{}
 \epsilon \int \frac{\vA}{r} | \nabla \phi|^2
+\epsilon^{2} \int \frac{\vA}{(1+r)} |\nabla \phi|^2 
\\&
+  \epsilon^{3} \int \frac{\vA}{r(1+r)^{2}} |\phi|^2
  +\epsilon^{2} \int \frac{1}{(1+r)}\left| \partial_r \left(\frac{\vA}{r}\right)\right| |\phi|^2
\\&
+  \epsilon^{2}  \int \frac{1}{(1+r)}\left|\vA ''  -\frac{\vA '}{r} \right|  |\phi|^2
.
\end{aligned}
\]
Now, we shall use the following result.

\begin{lem}\label{Le2}
Let
\begin{equation}\label{eq:varphi}
\vA (r) =\frac{r^{3+\de}}{(1+r)^2}, \quad \de>0.
\end{equation}
Then, we have
\begin{equation}\label{eq:va_delta}
\begin{aligned}
& \vA '= \frac{r^{2+\de}(3+\de+(1+\de) r)}{(1+r)^3} ,\\
& \vA '- \frac{\vA  }{r}  = \frac{r^{2+\de}(2+\de+\de r)}{(1+r)^3},  \quad   \frac{\vA  }{r}  =  \frac{r^{2+\de}}{(1+r)^2},
\\
& \vA ''- \frac{\vA'  }{r}  =~{}  \frac{r^{\de+1}}{(r+1)^4} ((-1+\de^2) r^2+2( \de^2+2\de-2)r+3+\de^2 +4\de  )
.
 \end{aligned}
 \end{equation}
 \end{lem}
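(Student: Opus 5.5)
The identities in \eqref{eq:va_delta} are pure calculus, so the plan is simply to differentiate \eqref{eq:varphi} directly and organise the algebra so that every expression comes out with a common factor $r^{k+\de}/(1+r)^j$. The only real danger is an algebra slip. There is no analytic obstacle, since $\vA$ is smooth on $(0,\infty)$ and all expressions are rational in $r$ up to the factor $r^{\de}$.

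\emph{Step 1 (first derivative).} By the product rule,
\[
\vA' = (3+\de)\frac{r^{2+\de}}{(1+r)^2} - 2\,\frac{r^{3+\de}}{(1+r)^3}.
\]
Putting both terms over $(1+r)^3$ gives the numerator $r^{2+\de}\big((3+\de)(1+r)-2r\big)=r^{2+\de}\big(3+\de+(1+\de)r\big)$, which is the first formula.

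\emph{Step 2 (the combinations with $\vA/r$).} The identity $\vA/r=r^{2+\de}/(1+r)^2$ is immediate. Writing it as $r^{2+\de}(1+r)/(1+r)^3$ and subtracting it from $\vA'$ leaves the numerator $r^{2+\de}\big(3+\de+(1+\de)r-1-r\big)=r^{2+\de}(2+\de+\de r)$, as claimed.

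\emph{Step 3 (second derivative).} Write $\vA'=r^{2+\de}P(r)/(1+r)^3$ with $P=a+br$, where $a=3+\de$ and $b=1+\de$. Differentiating and factoring out $r^{1+\de}/(1+r)^4$ gives
\[
\vA''=\frac{r^{1+\de}}{(1+r)^4}\Big[(2+\de)P(1+r)+b\,r(1+r)-3rP\Big].
\]
Likewise $\vA'/r=r^{1+\de}P(1+r)/(1+r)^4$. Subtracting, the bracket for $\vA''-\vA'/r$ is
\[
(1+\de)P(1+r)+b\,r(1+r)-3rP.
\]
Expand using $P(1+r)=a+(a+b)r+br^2$ and collect powers of $r$:
\begin{itemize}
\item The $r^2$ coefficient is $(1+\de)b+b-3b=(\de-1)(1+\de)=\de^2-1$.
\item The $r$ coefficient is $(1+\de)(4+2\de)+(1+\de)-3(3+\de)=2(\de^2+2\de-2)$.
\item The constant is $(1+\de)(3+\de)=3+4\de+\de^2$.
\end{itemize}
This is exactly the last line of \eqref{eq:va_delta}.

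Finally, note that $\vA(0)=\vA'(0)=0$ follows since both carry a factor $r^{2+\de}$. As a consistency check, setting $\de=0$ in Step 3 recovers $-(r^3+4r^2-3r)/(1+r)^4$ from Lemma~\ref{Le1}. One point of care: unlike \eqref{eq:vA_HN}, this $\vA$ is not bounded but grows like $r^{1+\de}$. Its use in the virial functionals is therefore justified by the weighted space $E(\de)$ rather than by the boundedness assumed in Proposition~\ref{prop:K1}. This matters only for the subsequent application, not for the identities themselves.
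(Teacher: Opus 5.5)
Your computation is correct and is the same direct verification the paper has in mind; the paper only states that the proof is direct, and all four identities check out, including the coefficients $\de^2-1$, $2(\de^2+2\de-2)$ and $3+4\de+\de^2$. One small correction to your closing aside: the weight $r^3/(1+r)^2$ of Lemma~\ref{Le1} also grows linearly at infinity, so it is not bounded either and the contrast you draw is inaccurate, though this does not affect the lemma.
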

 The proof of Lemma \ref{Le2} is direct. The choice of $\varphi$ is motivated by the particularity of the 2d case and the lower value of the power of the nonlinearity, and it will lead to a better control on the decay estimates.

Computing for a fixed $K\in \bb{Z}$, and using \eqref{eq:va_delta}, one obtains
\[
\begin{aligned}
2& \frac{K^2}{r^3} \vA
			 - \frac{1}{2r^2} \vA ' 
 			+  \frac{1}{2r}	 \vA''	
			- \frac12   \vA''' 
			\\ 
			=&~{}  (4 K^2-\de^3)  \frac{r^{\de}}{2 (r+1)^5}  (r+1)^3
			\\&
			     + \frac{r^{\de}}{2 (r+1)^5} \left(\de^2(r^3-3 r^2-9 r-5)+\de (r^3+13 r^2+5 r-7)-  r^3-5r^2+17r -3\right)
			     .
\end{aligned}
\]			     
\new{After rearranging the terms, we conclude}
\[
\begin{aligned}
2 \frac{K^2}{r^3} \vA&
			 - \frac{1}{2r^2} \vA ' 
 			+  \frac{1}{2r}	 \vA''	
			- \frac12   \vA''' 
			\\ 			     
			=&~{}  (4 K^2-1-\de^3+\de^2+\de)  \frac{r^{\de}}{2 (r+1)^5}  (r+1)^3
			   \\
			   &  -2(1-5\de+3\de^2) \frac{r^{\de}}{2 (r+1)^5}(r+1)^2 
			\\&
			+(24r-18\de(1+r))  \frac{r^{\de}}{2 (r+1)^5} 	
			.
\end{aligned}
\]
Finally, recalling that $S\neq0,-1$ along with the previous computation using the weight function \eqref{eq:varphi}, after some standard estimates, we have
\begin{equation*}
\begin{aligned}
\dt \K
\leq&~{}
 -  \int   \frac{r^{2+\de}(2+\de+\de r)}{(1+r)^3}|\nabla \phi|^2
  -\frac{1}{8} \int   
			  \frac{r^{\de}}{(r+1)^2}|\phi|^2
\\&
+C\epsilon \int \frac{r^{2+\de}}{(1+r)^2} |\nabla \phi|^2 
\\&
+ C \epsilon^{2} \int
	\frac{r^{\de}}{(1+r)^5}\bigg(
							(2+\de+\de^2)r^3
							+(6+6\de+2\de^2)r^2
							+ (\de^2 +5\de +5) r
				 \bigg)|\phi|^2
.
\end{aligned}
\end{equation*}
Therefore, since $\epsilon$ is small,  we conclude
\begin{equation}\label{final_K}
\begin{aligned}
-\dt \K
\gtrsim&~{}
   \int   \frac{r^{2+\de}}{(1+r)^2}|\nabla \phi|^2
+ \int     \frac{r^{\de}}{  (r+1)^2}|\phi|^2
.
\end{aligned}
\end{equation}

\subsection{Massive case}

Notice that for the \emph{massive case}, we deal with the nonlinearities involved in the term $N_1$. \new{Working in the space $E(\delta)$ allows us to split the terms in a more convenient way to reach a lower-order nonlinearity (compare the following inequality with \eqref{cota_N1}). We have}
\[\begin{aligned}
|N_1|
\lesssim&~{}
\int |\vA'| |\phi|^{p+1}
+\int  \vA |\nabla \phi| |\phi|^{p}
\\
\lesssim&~{}
\epsilon^{p-1}\int \frac{|\vA'|}{(1+r)^{(p-1)/2}} |\phi|^{2}
+\int  \vA (|\nabla \phi|^2|\phi|^2+ |\phi|^{2+2(p-2)})
\\
\lesssim&~{}
C \epsilon^2\int  \frac{\vA }{(1+r)}|\nabla \phi|^2
+C\epsilon^{p-1}\int \frac{|\vA'|}{(1+r)^{(p-1)/2}} |\phi|^{2}
\\&
+C\epsilon^{2(p-2)}\int  \frac{\vA}{(1+r)^{p-2}}  |\phi|^{2}
.
\end{aligned}
\]
Then, since $p\geq p_*=5$, along with \eqref{eq:varphi}, we obtain
\[\begin{aligned}
|N_1|
\lesssim&~{}
C \epsilon^2\int  \frac{r^{3+\de}}{(1+r)^3}|\nabla \phi|^2
+C\epsilon^{6}\int  \frac{r^{3+\de}}{(1+r)^{5}}  |\phi|^{2}
\\&
+C\epsilon^{4}\int \frac{r^{2+\de}}{(1+r)^5} (3+\de+(1+\de)r)|\phi|^{2}
\\
\lesssim&~{}
C \epsilon^2\int  \frac{r^{2+\de}}{(1+r)^2}|\nabla \phi|^2
+C\epsilon^{4}\int  \frac{r^{\de}}{(1+r)^{2}}  |\phi|^{2}
.
\end{aligned}
\]
By arguing as in the massless case, we obtain \eqref{final_K}.

\subsection{A well-defined functional}
Recalling \eqref{eq:va_cub} and focusing on the interval $r\leq 1$, we have
\[
\begin{aligned}
|\vA'|+\frac{1}{r}|\vA'|+|\vA|+\frac{1}{r}|\vA|
=&~{}\frac{r^{1+\de}(1+r)(3+\de+(1+\de) r)}{(1+r)^3}+\frac{r^{1+\de}(r^2+r)}{(1+r)^2}
\\
\lesssim&~{}  r^{1+\de}.
\end{aligned}
\]
Similarly, for $r>1$, one observes
\[
\begin{aligned}
|\vA'|+\frac{1}{r}|\vA'|+|\vA|+\frac{1}{r}|\vA|
\leq &~{}(3+\de)\frac{r^{1+\de}(1+r)^2}{(1+r)^3} +\frac{r^{1+\de}(1+r)^2}{(1+r)^2}
\lesssim r^{1+\de}
.
\end{aligned}
\]
Finally, recalling the notation in \eqref{mod1}-\eqref{mod2} and using Cauchy-Schwarz inequality, we conclude
\[
\begin{aligned}
\left| \K_{1}\right| \lesssim &~{} \int  ( |\nabla \phi|^2+ |\phi|^2)r^{1+\de} dr.
\end{aligned}
\]
Again, using \eqref{eq:weight} and the fact that $\sup_{t\geq 0} \|{\bf \phi}(t)\|_{E(\delta)} <\varepsilon$, we conclude the uniform bound on $\K_{1}$.

\section{End of proof of Theorems \ref{thm:2D_m-} and \ref{thm:2D_m+}}\label{Section:2D} 

The following argument applies to both \eqref{limit2D_m-} in Theorem \ref{thm:2D_m-} and \eqref{limit2D} in Theorem \ref{thm:2D_m+}, using the corresponding bounds \eqref{final_K} and \eqref{final_K+}, respectively. We focus here on the proof of Theorem \ref{thm:2D_m-}.

\begin{lem}
Let $\phi$ be a solution to \eqref{eq:2Dr}, and consider the functional
\be\label{mathH}
\mathcal{H}(t) = \int_0^\infty \vA  |\phi|^2.
\ee
Assume that $\vA$ is smooth, $\vA(0)=0$, $\vA(r) \to 0$ as $r\to +\infty.$ Then one has
\be\label{dH}
\dfrac{d}{dt}\mathcal{H}(t)  = -2\Im \int_0^\infty \left( \vA'  - \frac{1}{r}\vA \right) \overline{\phi_1}\phi_2 
+2 \Im \int \vA  (\overline{\phi_2} W_2+\overline{\phi_1} W_1).
\ee
\end{lem}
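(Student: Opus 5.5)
This is a direct computation. We differentiate $\mathcal H$ under the integral sign, substitute the equations \eqref{eq:2Dr}, and integrate by parts once. Writing $|\phi|^2=|\phi_1|^2+|\phi_2|^2$ with complex components, we have
\[
\frac{d}{dt}\mathcal H=2\Re\int \vA\,(\overline{\phi_1}\partial_t\phi_1+\overline{\phi_2}\partial_t\phi_2).
\]
From \eqref{eq:2Dr}, $\partial_t\phi_1=-i\big[(\partial_r+\tfrac{S+1}{r})\phi_2-m\phi_1+W_1\big]$ and $\partial_t\phi_2=-i\big[-(\partial_r-\tfrac{S}{r})\phi_1+m\phi_2+W_2\big]$. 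Since $2\Re(-iz)=2\Im z$, we get
\[
\frac{d}{dt}\mathcal H=2\Im\int\vA\Big[\overline{\phi_1}\big(\partial_r+\tfrac{S+1}{r}\big)\phi_2-\overline{\phi_2}\big(\partial_r-\tfrac{S}{r}\big)\phi_1-m|\phi_1|^2+m|\phi_2|^2+\overline{\phi_1}W_1+\overline{\phi_2}W_2\Big].
\]

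The mass terms are real, so they drop out under $\Im$. The nonlinear terms already give the second integral in \eqref{dH}.

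For the linear part, note that $\Im(\overline{\phi_2}\phi_1)=-\Im(\overline{\phi_1}\phi_2)$. Hence the $1/r$ terms combine as
\[
2\Im\int \frac{\vA}{r}\big[(S+1)\overline{\phi_1}\phi_2+S\,\overline{\phi_2}\phi_1\big]=2\Im\int\frac{\vA}{r}\,\overline{\phi_1}\phi_2 .
\]
This is exactly where the $S$-dependence cancels. For the derivative terms, integrate by parts in $\int\vA\,\overline{\phi_2}\partial_r\phi_1$. The boundary terms vanish because $\vA(0)=0$, $\vA(r)\to0$ at infinity and $\phi$ is bounded (the $L^\infty$ hypothesis, or Lemma \ref{lem2d}). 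This gives
\[
-\int\vA\,\overline{\phi_2}\partial_r\phi_1=\int\vA'\,\overline{\phi_2}\phi_1+\int\vA\,\partial_r\overline{\phi_2}\,\phi_1 .
\]
Now $\Im(\phi_1\partial_r\overline{\phi_2})=-\Im(\overline{\phi_1}\partial_r\phi_2)$, so the two $\vA\,\partial_r$ terms cancel. What remains from the derivative terms is
\[
2\Im\int\vA'\,\overline{\phi_2}\phi_1=-2\Im\int\vA'\,\overline{\phi_1}\phi_2 .
\]
Adding the $1/r$ contribution gives $-2\Im\int(\vA'-\tfrac{1}{r}\vA)\overline{\phi_1}\phi_2$, which is the first term of \eqref{dH}.

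The only delicate point is justifying the integrations by parts and the differentiation under the integral for the regularity class in the theorems. For $E(\delta)$ or $H^1$ solutions we would argue by density: first prove \eqref{dH} for smooth decaying data, then pass to the limit using local well-posedness and continuity of both sides in $C_{loc}(\R:H^1)$. The weight $\vA$ is bounded by $r^{1+\delta}$ (as shown in the previous subsection), which makes all terms finite in $E(\delta)$.
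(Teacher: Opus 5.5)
Your computation is correct and is the paper's proof: substitute \eqref{eq:2Dr} into $2\Re\int\vA\,\overline{\phi}\,\partial_t\phi$, drop the real mass terms, use that $\overline{\phi_2}\phi_1+\overline{\phi_1}\phi_2$ is real to eliminate the $S$-dependence, and integrate by parts once. The paper removes the derivative from $\partial_r\phi_2$ rather than from $\partial_r\phi_1$, which makes no difference.

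The paper has no regularity discussion, and your justification paragraph does not need the density or well-posedness argument (the paper does not establish well-posedness for general $W$ anyway). For the weight \eqref{vAA} one has $\vA\lesssim r$ and $|\vA'|+|\vA/r|\lesssim(1+r)^{-3}$, so for $H^1\cap L^\infty$ solutions every step can be justified directly at each fixed time. Also, the bound $\vA\lesssim r^{1+\delta}$ you quote is for the virial weight \eqref{eq:varphi}, not for the weight used in $\mathcal H$.
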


\begin{proof}
We can see that 
\[ \begin{aligned}
\dfrac{d}{dt}\mathcal{H}(t) 
=&~{}2\Re \int_0^\infty \vA (-i)(i\partial_t \phi_1 \overline{\phi_1}+i\partial_t \phi_2 \overline{\phi_2})
\\
=&~{}2\Im \int_0^\infty \vA   \left( 
							\overline{\phi_1}  \partial_r  \phi_2 
							-\overline{\phi_2}\partial_r \phi_1 
						 \right)
	\\&
+2\Im \int_0^\infty \vA   \left( 
							   \frac{1}{r} \overline{\phi_1} \phi_2 
							+  \frac{S}{r}( \overline{\phi_2}\phi_1 +\overline{\phi_1} \phi_2 )
						 \right)
\\&
+2 \Im \int \vA  (\overline{\phi_2} W_2+\overline{\phi_1} W_1).
\end{aligned} 
\] 
Simplifying,
\[ \begin{aligned}
\dfrac{d}{dt}\mathcal{H}(t) 
=&~{}- 2 \Im \int \vA  \bigg( \partial_r \overline{\phi_1} \phi_2 + \overline{\phi_2}  \partial_r  \phi_1\bigg)
-2\Im \int_0^\infty \vA'  \overline{\phi_1}\phi_2 
\\&
+2\Im \int_0^\infty \vA   \left( 
							   \frac{1}{r} \overline{\phi_1} \phi_2 
							+  \frac{S}{r}( \overline{\phi_2}\phi_1 +\overline{\phi_1} \phi_2 )
						 \right)
\\&
+2 \Im \int \vA  (\overline{\phi_2} W_2+\overline{\phi_1} W_1)
\\
=&~{}
-2\Im \int_0^\infty \left( \vA'  - \frac{1}{r}\vA \right) \overline{\phi_1}\phi_2 
+2 \Im \int \vA  (\overline{\phi_2} W_2+\overline{\phi_1} W_1)
.
\end{aligned} 
\] 
This ends the proof of the result.
\end{proof}

Of particular importante is the lack of conservation of mass in the previous computation. Therefore, we need a good choice for $\vA$. Before making this choice, we make a further estimate on the derivative of $\mathcal H$ \eqref{dH}. Since $W$ satisfies \eqref{eq:W_2D}, and Lemma \ref{lem2d} holds, we have
\[ 
\begin{aligned}
\left|\dfrac{d}{dt}\mathcal{H}(t) \right|  
\lesssim & 
\int  \left| \vA'  -\frac{1}{r}\vA   \right| |\phi|^2 +  \int \vA |\phi|^{p+1} \\
\lesssim & 
\int  \left| \vA'  -\frac{1}{r}\vA   \right| |\phi|^2 + \epsilon^2 \int \frac{\vA}{(1+r)} |\phi|^{p-1}.
\end{aligned} 
\] 
We set
\be\label{vAA}
\vA (r) = \dfrac{r}{(1+r)^3}.
\ee
Using the expression for $\vA $, we have $\vA '-\dfrac{\vA }{r} =-\dfrac{3 r}{(r+1)^4}$. Combining this with the fact that $W$ satisfies \eqref{eq:W_2D}, where  $p_{*}\geq3$ (see the conditions \emph{\ref{thm:-_m-_w}} and \emph{\ref{thm:-_m+_w}}; and \emph{\ref{thm:+_massless_Strong}} and \emph{\ref{thm:+_massive}} in Theorem \ref{thm:2D_m-} and Theorem \ref{thm:2D_m+}, respectively) and by Lemma \ref{lem2d}, we obtain
\[ 
\begin{aligned}
\left|\dfrac{d}{dt}\mathcal{H}(t) \right|  
\lesssim & 
\int  \left| \vA'  -\vA  \frac{1}{r}\right| |\phi|^2 + \epsilon^2 \int  \dfrac{\vA}{(1+r)}  |\phi|^2
\\
\lesssim & 
\int \frac{3 r}{(r+1)^4}  |\phi|^2 + \epsilon^2 \int  \frac{r}{(r+1)^4}  |\phi|^2
\lesssim 
\int \frac{r}{(r+1)^4}  |\phi|^2.
\end{aligned} 
\]
Recall that \eqref{final_K} implies that there exists a sequence $t_n \to \infty$ such that $\mathcal{H}(t_n) \to 0$. \new{More precisely, integrating \eqref{final_K} in time and using that $\K$ is uniformly bounded (see the bound on the virial functionals in Subsection~\ref{sec:thm+}), one obtains 
\be\label{integra}
\int_0^\infty\!\left(\int \tfrac{r^{2+\delta}}{(1+r)^2}|\nabla\phi|^2+\tfrac{r^{\delta}}{(1+r)^2}|\phi|^2\right)\,dt<\infty;
\ee
in particular the integrand is summable in time ($L^2_t$), so it must vanish along a sequence $t_n\to\infty$, which yields $\mathcal H(t_n)\to0$.} Indeed, we treat the case of Theorem \ref{thm:2D_m-}: from \eqref{mathH} and \eqref{vAA}, and $\delta \in (0,1)$,
\[
\ba
\mathcal{H}(t) = &~{} \int_0^1 \frac{r}{(1+r)^3}|\phi|^2 +\int_1^\infty \frac{r}{(1+r)^3}|\phi|^2 \\
\lesssim & \int_0^1 \frac{r^\delta}{(1+r)^2}|\phi|^2 + \int_1^\infty \frac{r^\delta}{(1+r)^2}|\phi|^2  = \int \frac{r^\delta}{(1+r)^2}|\phi|^2.
\ea
\]
Integrating the inequality above on $[t,t_n]$ we see that 
\[ 
\begin{aligned}
|\mathcal{H}(t_n)-\mathcal{H}(t)| =&~{} \left|\int_t^{t_n}\dfrac{d}{dt}\mathcal{H}(s)ds\right| \\
\leq &~{} \int_t^{t_n}\left|\dfrac{d}{dt}\mathcal{H}(s)\right|ds \lesssim  \int_t^{t_n} \int \frac{r}{(r+1)^4}  |\phi|^2 (s,r)dr ds ,
\end{aligned} 
\]
and passing to limit as $n \to \infty$ we have for $\delta\in (0,1)$,
\[
\mathcal{H}(t) \leq \int_t^\infty  \int \frac{r}{(r+1)^4}  |\phi|^2 (s,r)dr ds \lesssim \int_t^\infty  \int \frac{r^\delta}{(r+1)^2}  |\phi|^2 (s,r)dr ds,
\]
and hence $\displaystyle\lim_{t \to \infty}\mathcal{H}(t) =0$. 
To conclude the proof is enough to note that for any $R>0$ we have  
\[ \begin{aligned}
\|\phi\|_{ L^2(B(0,R))}^2 
 \lesssim &~{} (1+R)^3\int_0^R \dfrac{r}{(1+r)^3} |\phi|^2 dr \\
\lesssim & ~{} (1+R)^3 ~{} \mathcal{H}(t),
\end{aligned} \]
and the result \eqref{limit2D} follows.  The case of Theorem \ref{thm:2D_m+} is completely similar using \eqref{final_K+} as key integrated virial term.

\appendix

\section{Proof Claim \ref{Cl1}}\label{app:Cl1}

Let $K\in\mathbb R$, and let $\vA $ be a smooth bounded function such that $\vA (0)=0$ and let $f\in H^1$. It holds that
\[
\begin{aligned}
& \int \left[ \vA  \partial_r f +\frac12 \vA ' f \right]   \left(\partial_r+\frac{K+1}{r} \right)\left(\partial_r -\frac{K}{r}\right)f \\
&\quad = \int \left[ \vA  \partial_r f +\frac12 \vA ' f \right]   \left( \partial_r^2 f+\frac{K+1}{r} \partial_r f-\frac{K(K+1)}{r^2}f -\partial_r \left(\frac{K}{r}f\right)\right)\\
&\quad = \int \left[ \vA  \partial_r f +\frac12 \vA ' f \right]   \left( \partial_r^2 f+\frac{1}{r} \partial_r f-\frac{K^2}{r^2}f 
 \right).
\end{aligned}
\]
Expanding and rewriting, we obtain
\[
\begin{aligned}
& \int \left[ \vA  \partial_r f +\frac12 \vA ' f \right]   \left(\partial_r+\frac{K+1}{r} \right)\left(\partial_r -\frac{K}{r}\right)f \\
 &\quad = \int \vA  \left(\frac12 \partial_r[ ( \partial_r f)^2]+\frac{1}{r} (\partial_r f)^2-\frac{K^2}{2r^2} \partial_r (f^2 )
 \right)
 \\&\qquad +
 \frac12 \int  \vA '    \left( f \partial_r^2 f+\frac{1}{2r} \partial_r (f^2)-\frac{K^2}{r^2}f^2  \right)
 .
\end{aligned}
\]
Now, integrating by parts,  we have
\[
\begin{aligned}
& \int \left[ \vA  \partial_r f +\frac12 \vA ' f \right]   \left(\partial_r+\frac{K+1}{r} \right)\left(\partial_r -\frac{K}{r}\right)f \\
  &\quad =-\frac12 \int \left(2\vA' -\frac{2}{r} \vA \right)(\partial_r f)^2
 \\&\qquad 
 - \frac12 \int \bigg( 
 			\partial_r\left( \vA '  \frac{1}{2r}\right) 
 			 - \partial_r \left(\vA \frac{K^2}{r^2}\right)
 			+   \vA '  \frac{K^2}{r^2}
			- \frac12   \vA'''  
		\bigg) f ^2
		.
\end{aligned}
\]
Expanding and rearranging the terms, we conclude
\[
\begin{aligned}
& \int \left[ \vA  \partial_r f +\frac12 \vA ' f \right]   \left(\partial_r+\frac{K+1}{r} \right)\left(\partial_r -\frac{K}{r}\right)f \\
  &\quad =-\frac12 \int \left(2\vA' -\frac{2}{r} \vA \right)(\partial_r f)^2
 \\&\qquad 
 - \frac12 \int \bigg( 
 			 \vA''  \frac{1}{2r}
			 -\vA '  \frac{1}{2r^2}
 			 - \vA' \frac{K^2}{r^2}
			 +2\vA \frac{K^2}{r^3}
 			+   \vA '  \frac{K^2}{r^2}
			- \frac12   \vA'''  
		\bigg) f ^2
\\
  &\quad =-\frac12 \int \left(2\vA' -\frac{2}{r} \vA \right)(\partial_r f)^2
 \\&\qquad 
 - \frac12 \int \bigg( 
 			2\vA \frac{K^2}{r^3}
 			+ \vA''  \frac{1}{2r}
			 -\vA '  \frac{1}{2r^2}
			- \frac12   \vA'''  
		\bigg) f ^2.
\end{aligned}
\]
For the second identity, we follow the same strategy. Then, we obtain
\[
\begin{aligned}
 \int \left[ \vA  \partial_r f +\frac12 \vA ' f \right]  & \left(\partial_r-\frac{K}{r} \right)\left(\partial_r +\frac{K+1}{r}\right)f \\
&\quad =  \int \vA     \left( \frac12 \partial_r[( \partial_r f)^2] +\frac{1}{r} (\partial_r f )^2 -\frac{(K+1)^2}{2r^2}\partial_r (f^2) \right)\\
&\qquad +  \frac12\int  \vA '    \left( f \partial_r^2 f+\frac{1}{2r} \partial_r (f^2) -\frac{(K+1)^2}{r^2}f^2 \right)\\
&\quad = -\frac12  \int    \left( 2\vA' -\vA \frac{2}{r} \right)(\partial_r f )^2
\\&\qquad \quad 
		-\frac12 \int    \bigg(  2\vA \frac{(K+1)^2}{r^3} 
		-  \frac12  \vA '''  
		+ \vA ''   \frac{1}{2r} 
		-\vA '   \frac{1}{2r^2}  
		 \bigg)  f^2.
\end{aligned}
\]

\section{Proof of Proposition \ref{prop:K1} }\label{app:Proof_virial}

First, we will recall a simple but useful fact: Let $f\in H^1 $ and $\vA$ a smooth bounded continuous function with $\vA(0)=0$. Then
 \begin{equation}\label{eq:int0}\int \left( \vA  f_{r}+\frac12 \vA ' f\right)f=0.\end{equation}

\begin{proof}[Proof Proposition \ref{prop:K1}]
Let us focus on $\K_1$. Differentiating on \eqref{eq:K1} with respect of time and using \eqref{eq:SD2}, one obtain
\[
\begin{aligned}
\dt& \K_{1}\\
=&~{}\int \left[ \vA  \partial_t \partial_r \phi_{11} +\frac12 \vA ' \partial_t \phi_{11} \right]\left(  \left( \partial_r +\frac{S+1}{r}\right)  \phi_{22}-m\phi_{12} \right)
\\
&+\int \left[ \vA  \partial_r \phi_{11} +\frac12 \vA ' \phi_{11} \right]
			\left( \left( \partial_r +\frac{S+1}{r}\right) \partial_t  \phi_{22}
					-m\partial_t \phi_{12}  \right)
\\
=&~{}\int \left[ \vA   \partial_r \left(    \left( \partial_r +\frac{S+1}{r}\right)  \phi_{22}-m\phi_{12}   +W_{12} \right) 
\right. \\
&~{} \qquad \left. +\frac12 \vA ' \left(   \left( \partial_r +\frac{S+1}{r}\right)  \phi_{22}-m\phi_{12}   +W_{12} \right) \right]\\
& \qquad \times \left(  \left( \partial_r +\frac{S+1}{r}\right)  \phi_{22}-m\phi_{12} \right)
\\
&~{} +\int \left[ \vA  \partial_r \phi_{11} +\frac12 \vA ' \phi_{11} \right]
			\bigg( 
				\left( \partial_r +\frac{S+1}{r}\right)  \left(\left( \partial_r -\frac{S}{r}\right) \phi_{11}-m\phi_{21} -W_{21}\right)
				\\&\qquad \qquad\qquad
					\qquad \qquad \qquad +m\left( 
						 \left( \partial_r +\frac{S+1}{r}\right) \phi_{21}-m\phi_{11}+W_{11}
						\right)		
			 \bigg)
.
\end{aligned}
\]
Having in mind \eqref{eq:int0}, we get
\[
\begin{aligned}
\dt \K_{1}
=&~{}
\int \left[ \vA   \partial_r W_{12} 
+\frac12 \vA ' W_{12}  \right]\left(  \left( \partial_r +\frac{S+1}{r}\right)  \phi_{22}-m\phi_{12} \right)
\\
&\quad +\int \left[ \vA  \partial_r \phi_{11} +\frac12 \vA ' \phi_{11} \right]
			\\
			& \qquad \quad \times \bigg( 
				\left( \partial_r +\frac{S+1}{r}\right)  \left(\left( \partial_r -\frac{S}{r}\right) \phi_{11}-W_{21}\right)
					+m W_{11}		
			 \bigg)
.
\end{aligned}
\]
Applying Claim \ref{Cl1}, we conclude
\[
\begin{aligned}
\dt \K_{1}
=&~{}
\int \left[ \vA   \partial_r W_{12} +\frac12 \vA ' W_{12}  \right]\left(  \left( \partial_r +\frac{S+1}{r}\right)  \phi_{22}-m\phi_{12} \right)
\\ &~{}
+\int \left[ \vA  \partial_r \phi_{11} +\frac12 \vA ' \phi_{11} \right]
			\bigg( 
				- \left( \partial_r +\frac{S+1}{r}\right)  \left(W_{21}\right)
					+m W_{11}		
			 \bigg)
\\
&~{} 
-\frac12 \int \left(2\vA' -\frac{2}{r} \vA \right)(\partial_r \phi_{11}	)^2
 \\ &~{}
 - \frac12 \int \bigg(
 			2\vA \frac{\new{S}^2}{r^3}
 			+ \vA''  \frac{1}{2r}
			 -\vA '  \frac{1}{2r^2}
			- \frac12   \vA'''
		\bigg) \phi_{11}^2.
\end{aligned}
\]
Now, let us focus on $\widetilde{\K_1}$. Similar as before, taking derivative in \eqref{eq:tK1} with respect of time and using \eqref{eq:SD2}, we get
\[
\begin{aligned}
\dt \wK_{1}
=&\int \left[ \vA  \partial_r \partial_t \phi_{22} +\frac12 \vA ' \partial_t \phi_{22} \right] \left( \left( \partial_r -\frac{S}{r}\right) \phi_{11}-m\phi_{21}  \right)
\\ &+ \int \left[ \vA  \partial_r \phi_{22} +\frac12 \vA ' \phi_{22} \right]
\left( \left( \partial_r -\frac{S}{r}\right) \partial_t \phi_{11}-m\partial_t \phi_{21}  \right)
\\
=&\int \left[ \vA  \partial_r \left( \left( \partial_r -\frac{S}{r}\right) \phi_{11}-m\phi_{21} -W_{21} \right) +\frac12 \vA ' \left(  \left( \partial_r -\frac{S}{r}\right) \phi_{11}-m\phi_{21} -W_{21} \right)\right]\\
&\quad \times \left( \left( \partial_r -\frac{S}{r}\right) \phi_{11}-m\phi_{21}  \right)
\\
& + \int \left[ \vA  \partial_r \phi_{22} +\frac12 \vA ' \phi_{22} \right] \left( \partial_r -\frac{S}{r}\right) \left( \left( \partial_r +\frac{S+1}{r}\right)  \phi_{22}-m\phi_{12}   +W_{12} \right)\\
 & -m \int \left[ \vA  \partial_r \phi_{22} +\frac12 \vA ' \phi_{22} \right]    \left( \partial_r -\frac{S}{r}\right)  \left( - \left( \partial_r -\frac{S}{r}\right)  \phi_{12}+m\phi_{22}+W_{22} 
 \right).
\end{aligned}
\]
Using that \eqref{eq:int0} and Claim \ref{Cl1}, we obtain
\[
\begin{aligned}
\dt \wK_{1}
=&
-\int \left[ \vA  \partial_r \left( W_{21} \right) +\frac12 \vA ' \left(  W_{21} \right)\right]\left( \left( \partial_r -\frac{S}{r}\right) \phi_{11}-m\phi_{21}  \right)
\\& + \int \left[ \vA  \partial_r \phi_{22} +\frac12 \vA ' \phi_{22} \right] \\ 
&~{} \quad \left( 
				 \left( \partial_r -\frac{S}{r}\right) \left( \left( \partial_r +\frac{S+1}{r}\right)  \phi_{22}  +W_{12} \right)
				-m  W_{22} 
				 \right)
\\
=&
-\int \left[ \vA  \partial_r \left( W_{21} \right) +\frac12 \vA ' \left(  W_{21} \right)\right]\left( \left( \partial_r -\frac{S}{r}\right) \phi_{11}-m\phi_{21}  \right)
\\& + \int \left[ \vA  \partial_r \phi_{22} +\frac12 \vA ' \phi_{22} \right]  \left( 
				 \left( \partial_r -\frac{S}{r}\right) \left( W_{12} \right)
				-m  W_{22} 
				 \right)
\\& 
 -\frac12  \int    \left( 2\vA' -\vA \frac{2}{r} \right)(\partial_r  \phi_{22}   )^2
\\&		-\frac12 \int    \bigg(  2\vA \frac{(S+1)^2}{r^3} 
		-  \frac12  \vA '''  
		+ \vA ''   \frac{1}{2r} 
		-\vA '   \frac{1}{2r^2}  
		 \bigg)   \phi_{22}  ^2.
\end{aligned}
\]
Taking derivative with respect of time and replacing \eqref{eq:SD2}, we have
\[
\begin{aligned}
\dt& \K_{2}
\\
=&\int \left[ \vA  \partial_r \partial_t \phi_{12}+\frac12 \vA ' \partial_t \phi_{12}\right] \left(   \left( \partial_r +\frac{S+1}{r}\right)\phi_{21}-m\phi_{11}\right)
\\&
+\int \left[ \vA  \partial_r \phi_{12}+\frac12 \vA '  \phi_{12}\right] \left(   \left( \partial_r +\frac{S+1}{r}\right) \partial_t \phi_{21}-m\partial_t \phi_{11}\right)
\\
=&-\int \left[ \vA  \partial_r \left(  \left( \partial_r +\frac{S+1}{r}\right) \phi_{21}-m\phi_{11}+W_{11}\right)
\right.
\\
& \left. \qquad +\frac12 \vA ' \left(  \left( \partial_r +\frac{S+1}{r}\right) \phi_{21}-m\phi_{11}+W_{11}\right)\right] 
 \times \left(   \left( \partial_r +\frac{S+1}{r}\right)\phi_{21}-m\phi_{11}\right)
\\&
-\int \left[ \vA  \partial_r \phi_{12}+\frac12 \vA '  \phi_{12}\right]
\\&\qquad\times  \left[   
			\left( \partial_r +\frac{S+1}{r}\right) \left(  \left( \partial_r -\frac{S}{r}\right)  \phi_{12}-m\phi_{22}-W_{22}\right)
			\right.
			\\
			&\qquad \qquad \left. +m\left(  \left( \partial_r +\frac{S+1}{r}\right)  \phi_{22}-m\phi_{12}   +W_{12}\right)
			\right]
			.
\end{aligned}
\]
Now, using \eqref{eq:int0} along with \eqref{Cl1},  we obtain a simplified version of the above identity

\[
\begin{aligned}
\dt \K_{2}
=&- \int \left[ \vA  \partial_r \left( W_{11}\right)
+\frac12 \vA ' W_{11} \right] \left(   \left( \partial_r +\frac{S+1}{r}\right)\phi_{21}-m\phi_{11}\right)
\\&
-\int \left[ \vA  \partial_r \phi_{12}+\frac12 \vA '  \phi_{12}\right]
  \left(   
			\left( \partial_r +\frac{S+1}{r}\right) \left(-W_{22}\right)
			+m W_{12}
			\right)
\\&
+\frac12 \int \left(2\vA' -\frac{2}{r} \vA \right)(\partial_r  \phi_{12})^2
 \\&\qquad
 + \frac12 \int \bigg(
 			2\vA \frac{\new{S}^2}{r^3}
 			+ \vA''  \frac{1}{2r}
			 -\vA '  \frac{1}{2r^2}
			- \frac12   \vA'''
		\bigg)  \phi_{12} ^2
			.
\end{aligned}
\]
Following a similar procedure for the last identity, we obtain
\[\begin{aligned}
\dt& \widetilde{\K}_{2}
\\
=&\int \left[ \vA  \partial_r \partial_t \phi_{21}+\frac12 \vA '  \partial_t  \phi_{21}\right]\left( \left( \partial_r -\frac{S}{r}\right)  \phi_{12}-m\phi_{22}\right)
\\&
+\int \left[ \vA  \partial_r \phi_{21}+\frac12 \vA '   \phi_{21}\right]\left( \left( \partial_r -\frac{S}{r}\right) \partial_t   \phi_{12}-m \partial_t \phi_{22}\right)
\\
=&-\int \left[ \vA  \partial_r  \left(  \left( \partial_r -\frac{S}{r}\right)  \phi_{12}-m\phi_{22}-W_{22} \right)+\frac12 \vA '  \left(  \left( \partial_r -\frac{S}{r}\right)  \phi_{12}-m\phi_{22}-W_{22} \right) \right]
\\&\qquad \times \left( \left( \partial_r -\frac{S}{r}\right)  \phi_{12}-m\phi_{22}\right)
\\&
-\int \left[ \vA  \partial_r \phi_{21}+\frac12 \vA '   \phi_{21}\right]
\\&\qquad \times \left[
		 \left( \partial_r -\frac{S}{r}\right)\left( \left( \partial_r +\frac{S+1}{r}\right) \phi_{21}-m\phi_{11}+W_{11} \right)
		 \right.
		 \\&\qquad\qquad 
		 \left. +m \left(  \left( \partial_r -\frac{S}{r}\right) \phi_{11}-m\phi_{21} -W_{21} \right)
		 \right]
		 .
\end{aligned}
\]
Rearranging the terms and having in mind \eqref{eq:int0}, we obtain 
\[\begin{aligned}
\dt \widetilde{\K}_{2}
=&\int \left[ \vA  \partial_r  \left( W_{22} \right)+\frac12 \vA '  W_{22}  \right]
 \left( \left( \partial_r -\frac{S}{r}\right)  \phi_{12}-m\phi_{22}\right)
\\&
-\int \left[ \vA  \partial_r \phi_{21}+\frac12 \vA '   \phi_{21}\right]
\left[
		 \left( \partial_r -\frac{S}{r}\right)\left( \left( \partial_r +\frac{S+1}{r}\right) \phi_{21}+W_{11} \right)
		 -m W_{21} 
		 \right]
\\
=&\int \left[ \vA  \partial_r  \left( W_{22} \right)+\frac12 \vA '  W_{22}  \right]
 \left( \left( \partial_r -\frac{S}{r}\right)  \phi_{12}-m\phi_{22}\right)
\\&
-\int \left[ \vA  \partial_r \phi_{21}+\frac12 \vA '   \phi_{21}\right]
\left(
		 \left( \partial_r -\frac{S}{r}\right)\left( W_{11} \right)-m W_{21} 
		 \right)
\\&
 +\frac12  \int    \left( 2\vA' -\vA \frac{2}{r} \right)(\partial_r  \phi_{21} )^2
\\
& +\frac12 \int    \bigg(  2\vA \frac{(\new{S}+1)^2}{r^3}
				-  \frac12  \vA '''
				+ \vA ''   \frac{1}{2r}
				-\vA '   \frac{1}{2r^2}
			 \bigg)   \phi_{21}^2
			 .
\end{aligned}
\]

This ends the proof of the identity.
\end{proof}

\subsection*{Data Availability} All the data obtained for this work is presented in the same manuscript.

\subsection*{Conflict of Interest} The authors declare no conflict of interest in the production and possible publication of this work.


\begin{thebibliography}{99}

\bibitem{AY}   M.J. Ablowitz and Y. Zhu, \emph{Nonlinear wave packets in deformed honeycomb lattices}, SIAM J. Appl. Math. 73 (2013), 1959--1979.

\bibitem{AC_NLS_NonExistence} M. \'A. Alejo, A. J. Corcho, \emph{On the nonexistence of NLS breathers}, Phys. D 475 (2025), Paper No. 134580, 11 pp.

\bibitem{ACKM} M. \'A. Alejo, F. Cortez, C. Kwak and C. Mu\~noz, \emph{On the dynamics of zero-speed solutions for Camassa-Holm type equations}, IMRN, rnz038, \url{https://doi.org/10.1093/imrn/rnz038}.

\bibitem{AleMau}  M. \'A. Alejo, and C. Maul\'en, \emph{Decay for Skyrme wave maps}, Lett. Math. Phys. 112 (2022), no. 5, Paper No. 90, 33 pp.

\bibitem{BH} I. Bejenaru and S. Herr, \emph{The cubic Dirac equation: Small initial data in $H^{1/2}(\R^2)$}, Commun. Math. Phys. 343, 515--562 (2016)

\bibitem{B_SS_Kerr} W. Borrelli, \emph{Stationary solutions for the 2D critical Dirac equation with Kerr nonlinearity}. J. Differ. Equ. 263, 7941--7964 (2017)

\bibitem{B_loc} W. Borrelli, \emph{Weakly localized states for nonlinear Dirac equations}, Calc. Var. (2018) 57:155.

\bibitem{B_Sym_2d} W. Borrelli,  \emph{Symmetric solutions for a 2d critical dirac equation}, Commun. Contemp. Math. 24, 2150019 (2022). \url{https://doi.org/10.1142/S021919972150019X}

\bibitem{BF_decay} W.  Borrelli and R.L.  Frank, \emph{Sharp decay estimates for critical Dirac equations}. Trans. Am. Math. Soc. 373, 2045--2070 (2020).

\bibitem{bubbles} W. Borrelli, A. Malchiodi, \& R. Wu, \emph{ Ground state Dirac bubbles and Killing spinors}. Commun. Math. Phys. 383, 1151--1180 (2021). \href{https://doi.org/10.1007/s00220-021-04013-1}.

\bibitem{BC_2d} N. Bournaveas and T. Candy. \emph{Global Well-Posedness for the Massless Cubic Dirac Equation}. International Mathematics Research Notices 2016, n. 22, 6735-6828. \url{https://doi.org/10.1093/imrn/rnv361}.

\bibitem{BDAF} N. Boussa\"id, P. D'Ancona,  and L. Fanelli, \emph{Virial identity and weak dispersion for the magnetic Dirac equation}, J. Math. Pures Appl. 95 (2011) 137--150. 

\bibitem{Cacciafesta} F. Cacciafesta, \emph{Virial Identity and Dispersive estimates for the n-Dimensional Dirac Equation}, J. Math. Sci. Univ. Tokyo 18 (2011), 441--463.

\bibitem{CF_} F. Cacciafesta, L. Fanelli,  \emph{ Dispersive estimates for the Dirac equation in an Aharonov-Bohm field}. Journal of Differential Equations 263, 4382--4399 (2017). \url{https://doi.org/10.1016/j.jde.2017.05.018}

\bibitem{CH23}  T. Candy and S. Herr, \emph{The massless and the non-relativistic limit for the cubic Dirac equation}, preprint  \href{https://arxiv.org/abs/2308.12057}{arXiv:2308.12057} (2023).

\bibitem{CS_2016} F. Cacciafesta, E. S\'er\'e,  \emph{Local smoothing estimates for the massless Dirac--Coulomb equation in 2 and 3 dimensions}. Journal of Functional Analysis 271, 2339--2358. \url{https://doi.org/10.1016/j.jfa.2016.04.003}
\new{\bibitem{CazenaveVazquez} T. Cazenave and L. V\'azquez, \emph{Existence of localized solutions for a classical nonlinear Dirac field}, Comm. Math. Phys. 105 (1986), no. 1, 35--47.}
\bibitem{CKSCL} J. Cuevas-Maraver, P.G. Kevrekidis,  A. Saxena,  A. Comech, and R. Lan,  \emph{Stability of Solitary Waves and Vortices in a 2D Nonlinear Dirac Model}, Phys. Rev. Lett. 116, 214101.

\bibitem{CBCKS_2018} J. Cuevas-Maraver, N. Boussa\"id, A. Comech, R. Lan, P.G. Kevrekidis, and A. Saxena (2018). \emph{Solitary Waves in the Nonlinear Dirac Equation}. In: Carmona, V., Cuevas-Maraver, J., Fern\'andez-S\'anchez, F., Garc\'ia- Medina, E. (eds) Nonlinear Systems, Vol. 1. Understanding Complex Systems. Springer, Cham. \url{https://doi.org/10.1007/978-3-319-66766-9_4}.

\bibitem{DAFS}  P. D'Ancona, and L. Fanelli, and N. M. Schiavone, \emph{Eigenvalue bounds for non-selfadjoint Dirac operators}. Math. Ann. 383 (2022), no. 1-2, 621--644.

\bibitem{Dirac}  P. A. M. Dirac, The Principles of Quantum Mechanics, Oxford University Press, USA, 1982.

\bibitem{EG} M.B. Erdo\u{g}an, W.R Green, \emph{The Dirac Equation in Two Dimensions: Dispersive Estimates and Classification of Threshold Obstructions}. Commun. Math. Phys. 352, 719--757 (2017). \url{https://doi.org/10.1007/s00220-016-2811-8}
\new{
\bibitem{EstebanSere} M. J. Esteban and E. S\'er\'e, \emph{Stationary states of the nonlinear Dirac equation: a variational approach}, Comm. Math. Phys. 171 (1995), no. 2, 323--350.} 

\bibitem{Honey2} C.L. Fefferman, J.P. Lee-Thorp,  and M.I. Weinstein,  \emph{ Edge States in Honeycomb Structures}. Ann. PDE 2, 12 (2016). \href{https://doi.org/10.1007/s40818-016-0015-3}.

\bibitem{FW1} C. L. Fefferman, M. I. Weinstein, \emph{Honeycomb lattice potentials and Dirac points}, J. Am. Math. Soc. 25, 1169-1220 (2012)

\bibitem{FW2} C. L. Fefferman, M. I. Weinstein, \emph{Wave packets in honeycomb structures and two-dimensional Dirac equations}, Comm. Math. Phys. 326, 251-286 (2014)

\bibitem{Honey} C. L. Fefferman, and M. I. Weinstein, \emph{Waves in honeycomb structures}, Journ\'ees \'Equations aux deriv\'ees partialles, Biarritz, 3-7 juin 2012, GDR 243 4 (CNRS).

\bibitem{GS_hartree} V. Georgiev, and B. Shakarov, \emph{Global Large Data Solutions for 2D Dirac Equation with Hartree Type Interaction}, IMRN, Volume 2022, Issue 17, August 2022, Pages 12803--12820, \url{https://doi.org/10.1093/imrn/rnab082}.

\bibitem{HMM} S. Herr, C. Maul\'en, and C. Mu\~noz, \emph{Decay of solutions of nonlinear Dirac equations}, {\color{blue} Comm. Math. Phys. 407 (2026), no. 5, Paper No. 94, 52 pp.} 

\bibitem{KMM_Nonexistence_KG} M. Kowalczyk, Y. Martel, Y. \& C. Mu\~noz, \emph{Nonexistence of small, odd breathers for a class of nonlinear wave equations}. Lett. Math. Phys. 107, 921--931 (2017). \url{https://doi.org/10.1007/s11005-016-0930-y}.

\bibitem{EM20} M. E. Martinez, \emph{Decay of small odd solutions for long range Schr\"odinger and Hartree equations in one dimension}. Nonlinearity 33, 1156--1182 (2020).

\bibitem{MoMu} M. Morales and C. Mu\~noz, \emph{On local decay of inflaton and axion fields}. Partial Differ. Equ. Appl. 5 (2024), no. 3, Paper No. 19, 30 pp. 

\bibitem{OY04} T. Ozawa and K. Yamauchi. \emph{Structure of Dirac matrices and invariants for nonlinear Dirac equations}. Diff. Int. Eqns. 17 (9-10) 971 - 982, 2004. \url{https://doi.org/10.57262/die/1356060310}.

\bibitem{pecher1} H. Pecher, \emph{Local well-posedness for the nonlinear Dirac equation in two space dimensions}, Communications on Pure and Applied Analysis, 2014, 13(2): 673-685. doi: 10.3934/cpaa.2014.13.673 

\bibitem{pecher2} H. Pecher, Corrigendum of \emph{Local well-posedness for the nonlinear Dirac equation in two space dimensions}, Commun. Pure Appl. Anal. 14 (2015), no. 2, 737-742.

\bibitem{Transverse_Pelinovsky} D. Pelinovsky, Y. Shimabukuro, \emph{ Transverse Instability of Line Solitary Waves in Massive Dirac Equations}. J Nonlinear Sci 26, 365--403 (2016). \href{https://doi.org/10.1007/s00332-015-9278-1}.

\bibitem{soler} M. Soler, \emph{Classical, stable, nonlinear, spinor field with positive rest energy}, Phys. Rev. D {\bf 1} (1970), pp. 2766--2769.

\bibitem{Thaller} B. Thaller, \emph{The Dirac Equation}, Springer-Verlag Berlin Heidelberg 1992.

\bibitem{T58} W. E. Thirring, \emph{A soluble relativistic field theory}, Ann. Phys., 3 (1958), pp. 91--112.

\bibitem{wakano} M. Wakano, \emph{ Intensely Localized Solutions of the Classical Dirac-Maxwell Field Equations}, Progress of Theoretical Physics, Vol. 35, Issue 6, June 1966, Pages 1117--1141, \url{https://doi.org/10.1143/PTP.35.1117}.

\end{thebibliography}
\end{document}